\theoremstyle{plain}
\newtheorem{thm}{\textrm{Theorem}}
\newtheorem*{thm1}{\textrm{Main Theorem}}
\newtheorem{prop}[thm]{\textrm{Proposition}}
\author{Kuo-Wei Lee}
\title{Constant mean curvature foliation properties in the extended Reissner-Nordstr\"{o}m spacetimes}
\date{\today}
\begin{document}
\fontsize{12}{18pt plus.5pt minus.4pt}\selectfont
\maketitle
\begin{abstract}
We solve the spacelike, spherically symmetric, constant mean curvature hypersurfaces in the maximally extended Reissner-Nordstr\"{o}m spacetime
with the charge smaller than the mass.
Based on these results, we construct constant mean curvature foliations with fixed or varied mean curvature in each slice in this spacetime.
\end{abstract}

\section{Introduction}
In general relativity, spacelike constant mean curvature ({\small CMC}) hypersurfaces in spacetimes are important geometric objects.
Not only {\small CMC} hypersurfaces are used in the analysis on Einstein constraint equations \cite{CYY, L}
and in the gauge condition in the Cauchy problem of the Einstein equations \cite{AM, CYR},
but also the {\small CMC} foliation in cosmological spacetime is identified as the absolute time function \cite{Y}.
Some introduction to the {\small CMC} foliation theory can be found in \cite{KWL2} and references therein.

In this article we are interested in studying spacelike, spherically symmetric,
constant mean curvature ({\small SS-CMC})
hypersurfaces in the maximally extended Reissner-Nordstr\"{o}m spacetime with the charge smaller than the mass and finding {\small CMC}
foliations in this spacetime.
The Reissner-Nordstr\"{o}m spacetime is a static solution of the Einstein-Maxwell field equations.
The Reissner-Nordstr\"{o}m metric reduces to the Schwarzschild metric if there is no charge.
Since we have known many results on the {\small SS-CMC} hypersurfaces and {\small CMC} foliation properties in the extended Schwarzschild spacetime
(Kruskal extension) \cite{KWL, KWL2, LL1, LL2, MO, MO2},
they are very helpful to explore {\small CMC} related questions in the Reissner-Nordstr\"{o}m spacetime.
Remark that Tuite and \'{O} Murchadha also studied spacelike {\small CMC} hypersurfaces in the Reissner-Nordstr\"{o}m spacetime \cite{TO}.
Here we provide different points of view and show more properties in this topic.

To sum up, we answer some {\small CMC} hypersurfaces related questions in the Reissner-Nordstr\"{o}m spacetime.
These results are summarized as the following main theorem.

\begin{thm1}
Consider the maximally extended Reissner-Nordstr\"{o}m spacetime with the charge smaller than the mass.
\begin{itemize}
\item[\rm(a)] The initial value problem for the spacelike, spherically symmetric, constant mean curvature hypersurface equation
in the maximally extended Reissner-Nordstr\"{o}m spacetime is solvable and the solution is unique.
\item[\rm(b)] The Dirichlet problem for the spacelike, spherically symmetric, constant mean curvature hypersurface equation
in the maximally extended Reissner-Nordstr\"{o}m spacetime is solvable and the solution is unique.
\item[\rm(c)] Given $H\in\mathbb{R}$, we can construct a {\small CMC} foliation in the maximally extended Reissner-Nordstr\"{o}m spacetime
so that each slice has the constant mean curvature $H$.
\item[\rm(d)] There is a {\small CMC}
foliation with mean curvature in each slice increasing along the future time direction in the maximally extended Reissner-Nordstr\"{o}m spacetime.
\end{itemize}
\end{thm1}
\noindent More precise statements of these results can be seen in Theorem~\ref{IVP}--\ref{CMCfoliationFixH} and Theorem~\ref{CMCvariedHfoliation}.

The idea for proving these results comes from the analyses when dealing with {\small CMC} hypersurfaces in the Schwarzschild spacetime \cite{KWL, KWL2, LL1, LL2}.
Although this main theorem can be viewed as a generalization of theorems in the Schwarzschild spacetime,
we should mention the essential differences and difficulties between the Reissner-Nordstr\"{o}m spacetime and the Schwarzschild spacetime.
One main difference is that the maximally extended Reissner-Nordstr\"{o}m spacetime is formed by infinitely many Reissner-Nordstr\"{o}m spacetimes.
Compared to the extended Schwarzschild spacetime (Kruskal extension), it is formed by gluing only two Schwarzschild spacetimes.
Because we focus on the Reissner-Nordstr\"{o}m spacetime with the charge smaller than the mass,
every Reissner-Nordstr\"{o}m spacetime in standard coordinates is divided into three regions, $0<r<r_{-}, r_{-}<r<r_{+}$, and $r>r_{+}$.
The mechanism to glue infinitely many Reissner-Nordstr\"{o}m spacetimes is through the region $0<r<r_{-}$,
but no such region is in the Schwarzschild spacetime.

In order to construct a {\small CMC} foliation with fixed mean curvature in each slice,
once we get a {\small CMC} foliation in a pair of Reissner-Nordstr\"{o}m spacetimes,
we can copy the same {\small CMC} foliation to other pairs and get the whole {\small CMC} foliation in the maximally extended Reissner-Nordstr\"{o}m spacetimes.
However, if we want to find a {\small CMC} foliation with varied mean curvature in each slice,
we have to select different slices on different pairs of spacetimes so that the mean curvature is increasing along the future time direction.
This will be the essential difficulties in this topic.
Since different Reissner-Nordstr\"{o}m spacetime regions cause some different situations,
we separately estimate each term of the derivative of a function to show the positivity or negativity,
which is equivalent to the increasing mean curvature property in different regions.
Fortunately, we can construct a {\small CMC} foliation with required properties in the maximally extended Reissner-Nordstr\"{o}m spacetime.

The organization of this paper is as follows.
Section~\ref{Preliminary} is a brief introduction to the Reissner-Nordstr\"{o}m spacetime and its Penrose diagram.
We describe all spacelike, spherically symmetric,
constant mean curvature hypersurfaces in the extended Reissner-Nordstr\"{o}m spacetime and solve the initial value problem for the constant mean curvature equation
in section~\ref{IVPSSCMC}.
Dirichlet problem for the constant mean curvature equation is discussed in section~\ref{DirichletProblem}.
In section~\ref{CMCfoliationConstruction}, we will construct two types of constant mean curvature foliations in the extended Reissner-Nordstr\"{o}m spacetime.
Finally, we summarize results of this paper in section~\ref{Conclusion} and put some detailed computations in Appendix.

\section{The Reissner-Nordstr\"{o}m spacetime} \label{Preliminary}
In this section, we will briefly introduce the Reissner-Nordstr\"{o}m spacetime and its Penrose diagram.
The Reissner-Nordstr\"{o}m spacetime is a $4$-dimensional time-oriented Lorentzian manifold equipped with the metric
\begin{align*}
\mathrm{d}s^2=-\left(1-\frac{2M}r+\frac{e^2}{r^2}\right)\mathrm{d}t^2+\frac1{\left(1-\frac{2M}r+\frac{e^2}{r^2}\right)}\,\mathrm{d}r^2
+r^2\,\mathrm{d}\theta^2+r^2\sin^2\theta\,\mathrm{d}\phi^2,
\end{align*}
where $M>0$ represents the gravitational mass, and $e$ is the electric charge.
When $e=0$, the Reissner-Nordstr\"{o}m spacetime reduces to the Schwarzschild spacetime.
Here we denote $h(r)=1-\frac{2M}r+\frac{e^2}{r^2}$ in convenience.

In this article, we will focus on the case $e^2<M^2$. It implies $h(r)=0$ has two real roots $r_{\pm}=M\pm\sqrt{M^2-e^2}$,
and the metric is regular in region {\tt I} ($r>r_{+}$), region {\tt I\!I} ($r_{-}<r<r_{+}$), and region {\tt I\!I\!I} ($0<r<r_{-}$).
When $r=r_{\pm}$, they are coordinates singularities. The metric can be analytically extended at $r=r_{\pm}$.
The maximally extended Reissner-Nordstr\"{o}m spacetime can be constructed by connecting infinitely many Reissner-Nordstr\"{o}m spacetimes.
Its Penrose diagram is illustrated in Figure~\ref{PDRN},
where regions {\tt I'}, {\tt I\!I'}, and {\tt I\!I\!I'} come from another Reissner-Nordstr\"{o}m spacetime and we upside down of the spacetime to combine them.
We refer to Hawking and Ellies' book \cite[pages 156--159]{HE} for more discussions on the construction of the maximally extended Reissner-Nordstr\"{o}m spacetime.

Here we set $X$-axis and $T$-axis in this Penrose diagram and choose $\partial_T$
as the future directed timelike vector field in the maximally extended Reissner-Nordstr\"{o}m spacetime.

\begin{figure}[h]
\centering
\psfrag{X}{$X$}
\psfrag{T}{$T$}
\psfrag{A}{\tt I}
\psfrag{B}{\tt I\!I}
\psfrag{C}{\tt I\!I\!I}
\psfrag{a}{\tt I'}
\psfrag{b}{\tt I\!I'}
\psfrag{c}{\tt I\!I\!I'}
\psfrag{i}{\footnotesize$i^{-}$}
\psfrag{j}{\footnotesize$i^{0}$}
\psfrag{I}{\footnotesize$i^{+}$}
\psfrag{P}{$r=0$}
\psfrag{Q}{$r=r_{+}$}
\psfrag{R}{$r=r_{-}$}
\psfrag{S}{$r=\infty$}
\includegraphics[height=120mm, width=62mm]{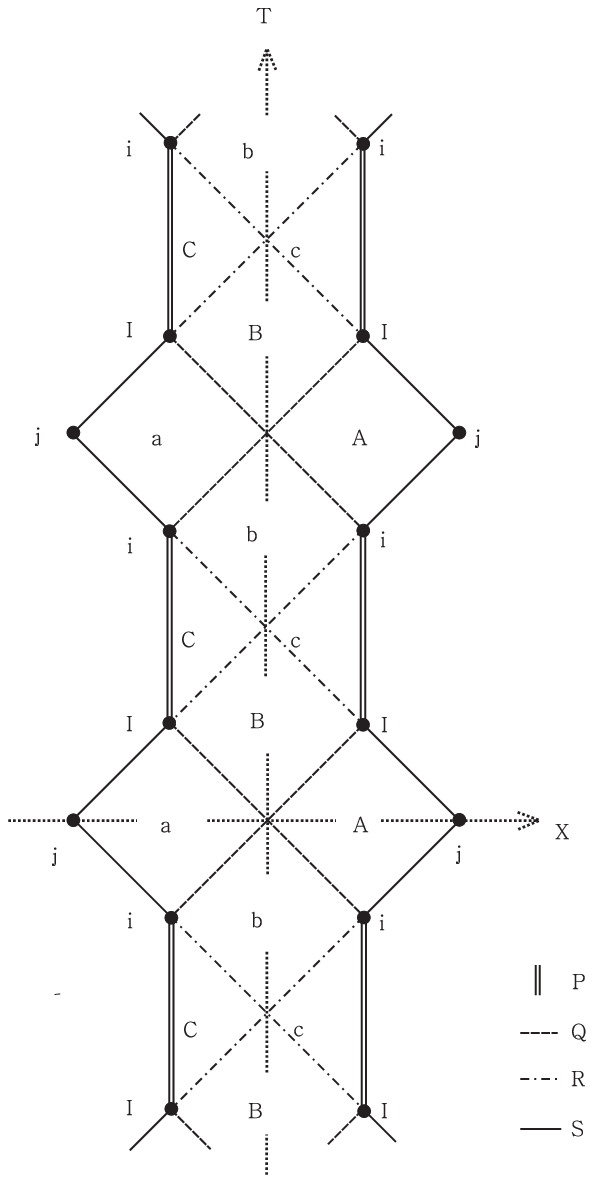}
\caption{Penrose diagram for the maximally extended Reissner-Nordstr\"{o}m spacetime with $e^2<M^2$.} \label{PDRN}
\end{figure}

\section{Initial value problem for the SS-CMC equation in the Reissner-Nordstr\"{o}m spacetime} \label{IVPSSCMC}
Now we are interested in the spacelike, spherically symmetric,
constant mean curvature ({\small SS-CMC}) hypersurfaces in the maximally extended Reissner-Nordstr\"{o}m spacetime.
All {\small SS-CMC} hypersurfaces are solved and these constructions are similar to the constructions in the Schwarzschild spacetime \cite{LL1}.
First, we solve the {\small SS-CMC} equation in each Reissner-Nordstr\"{o}m spacetime region,
and then we determine the correct parameters to smoothly glue two {\small SS-CMC} hypersurfaces at every coordinate singularity
so that the joined hypersurface is the {\small SS-CMC} hypersurface defined in the extended Reissner-Nordstr\"{o}m spacetime.
Compared to the Schwarzschild spacetime case, we still need to consider {\small SS-CMC} hypersurfaces in region {\tt I\!I\!I}
and look at the behavior of {\small SS-CMC} hypersurfaces at $r=r_{-}$.
This case is in fact similar to the case in region {\tt I}, and it does not cause any problem.

Here we only state the key ingredients for the construction of {\small SS-CMC} solutions,
and these materials will be used to describe {\small CMC}
foliations in the maximally extended Reissner-Nordstr\"{o}m spacetime in section~\ref{CMCfoliationConstruction}.
Suppose that $\Sigma:(t=f(r),r,\theta,\phi)$ is an {\small SS-CMC} hypersurface in the Reissner-Nordstr\"{o}m spacetime.
The \noindent{\small SS-CMC} equation in the Reissner-Nordstr\"{o}m spacetime is
\begin{align*}
f''+\left(\left(\frac1{h}-(f')^2h\right)\left(\frac{2h}{r}+\frac{h'}{2}\right)+\frac{f'}{h}\right)f'\mp3H\left(\frac1h-(f')^2h\right)^{\frac32}=0,
\end{align*}
where $h(r)=1-\frac{2M}{r}+\frac{e^2}{r^2}$ and it implies $h'(r)=\frac{2M}{r^2}-\frac{2e^2}{r^3}$, and $H$ is the mean curvature.
The spacelike condition is equivalent to $\frac1h-(f')^2h>0$.
Notice that the minus sign (or plus sign) of the term $\mp3H\left(\frac1h-(f')^2h\right)^{\frac32}$
is applied in regions {\tt I}, {\tt I\!I}, {\tt I\!I\!I} (or {\tt I'}, {\tt I\!I'}, {\tt I\!I\!I'}).
The reason for different signs of this term in {\small SS-CMC} equations is that we choose $\partial_T$
as the future timelike direction in the Penrose diagram, and regions {\tt I', I\!I', I\!I\!I'}
come from the upside down of some Reissner-Nordstr\"{o}m spacetime.
It implies that $\partial_T$ points to opposite direction between regions {\tt I} and {\tt I'}, {\tt I\!I} and {\tt I\!I'},
{\tt I\!I\!I} and {\tt I\!I\!I'}, respectively.

Taking region {\tt I\!I} for example,
suppose that {\small SS-CMC} hypersurfaces are piecewisely divided into graphs of functions with positive slope or negative slope
(allowing $f'(r)=+\infty$ or $-\infty$ at some point).
Denote $l(r)=\frac1{\sqrt{-h(r)}}\left(-Hr+\frac{c}{r^2}\right)$,
\footnote{Compared to the paper \cite[page 13, line $-8$, or page 15, line $-8$]{LL1},
here we change the sign of the parameter, that is, let $c=-c_2$.
The reason for changing the sign of this parameter is that it will be easier to describe the {\small CMC}
foliation properties in section~\ref{CMCfoliationConstruction}.}
then
\begin{align*}
f'(r)=\left\{
\begin{array}{ll}
\frac1{-h(r)}\sqrt{\frac{l^2(r)}{l^2(r)-1}} & \mbox{if } f'(r)>0\\[2mm]
\frac1{h(r)}\sqrt{\frac{l^2(r)}{l^2(r)-1}} & \mbox{if } f'(r)<0,
\end{array}\right.
\mbox{and we require } l(r)>1.
\end{align*}
Similarly, in region {\tt I\!I'}, denote $l(r)=\frac1{\sqrt{-h(r)}}\left(Hr-\frac{c}{r^2}\right)$, then
\begin{align*}
f'(r)=\left\{
\begin{array}{ll}
\frac1{-h(r)}\sqrt{\frac{l^2(r)}{l^2(r)-1}} & \mbox{if } f'(r)>0\\[2mm]
\frac1{h(r)}\sqrt{\frac{l^2(r)}{l^2(r)-1}} & \mbox{if } f'(r)<0,
\end{array}\right.
\mbox{and we require } l(r)>1.
\end{align*}
Since the {\small SS-CMC} equation is a second order ordinary differential equation,
the solution of $f'(r)$ gives a freedom $c$, and the solution of $f(r)$ gives another freedom, called $\bar{c}$ for instance.

From this explicit formula, we know that the domain of the function $f(r)$ depends on the parameter $c$.
In region {\tt I\!I}, we get the condition $c>Hr^3+r(-r^2+2Mr-e^2)^{\frac12}\stackrel{\tiny{\mbox{denote}}}{=\!=\!=}F(H,r)$,
so the domain of $f(r)$ is
\begin{align*}
\{r\in(r_{-},r_{+})|c>F(H,r)\}\cup\{r\in(r_{-},r_{+})|c=F(H,r) \mbox{ and } f(r) \mbox{ is finite}\}.
\end{align*}
In region {\tt I\!I'}, we get the condition $c<Hr^3-r(-r^2+2Mr-e^2)^{\frac12}\stackrel{\tiny{\mbox{denote}}}{=\!=\!=}G(H,r)$,
and the domain of $f(r)$ is
\begin{align*}
\{r\in(r_{-},r_{+})|c<G(H,r)\}\cup\{r\in(r_{-},r_{+})|c=G(H,r) \mbox{ and } f(r) \mbox{ is finite}\}.
\end{align*}
Let $C_H=\max\limits_{r\in[r_-,r_+]}F(H,r)=F(H,R_H)$ and $c_H=\min\limits_{r\in[r_-,r_+]}G(H,r)=G(H,r_H)$.
Now we are ready to describe {\small SS-CMC} hypersurfaces in the extended Reissner-Nordstr\"{o}m spacetime.

\begin{itemize}
\item[(A)]
If $c\in(c_H,C_H)$, two graphs of $f(r)$ with positive slope and negative slope can be smoothly glued at the point which satisfies $f'(r)=+\infty$ or $-\infty$
by choosing correct $\bar{c}$,
so the union of two graphs of $f(r)$ forms an {\small SS-CMC} hypersurface in region {\tt I\!I} or {\tt I\!I'}.
This joined {\small SS-CMC} hypersurface will touch the coordinates singularities $r=r_{-}$ or $r=r_{+}$ in the extended Reissner-Nordstr\"{o}m spacetime sense,
and we can find an {\small SS-CMC} hypersurface in different region with the same $c$ and correct $\bar{c}$
to glue them smoothly and finally we get the whole {\small SS-CMC} hypersurface.
In this case, the {\small SS-CMC} hypersurface will range {\tt I'}, {\tt I\!I}, {\tt I} (see Figure~\ref{CMCinRNpaperIVP2} (A)),
or {\tt I\!I\!I}, {\tt I\!I}, {\tt I\!I\!I'}, or
{\tt I'}, {\tt I\!I'}, {\tt I}, or {\tt I\!I\!I}, {\tt I\!I'}, {\tt I\!I\!I'}.
\item[(B)] If $c=C_H$ (or $c=c_H$), $f(r)$ is defined on $(r_{-},R_{H})$ or $(R_{H},r_{+})$ (or $(r_{-},r_{H})$ or $(r_{H},r_{+})$).
By calculating the order of $f'(r)$ near $r=R_{H}$ (or $r=r_{H}$), we know that $f'(r)\in O(|r-R_{H}|^{-1})$ (or $f'(r)\in O(|r-r_{H}|^{-1})$)
so that $\lim\limits_{r\to R_{H}}|f(r)|=\infty$ (or $\lim\limits_{r\to r_{H}}|f(r)|=\infty$).
This {\small SS-CMC} hypersurface will touch coordinate singularities $r=r_{-}$ or $r=r_{+}$,
and we can find an {\small SS-CMC} hypersurface in region {\tt I}, {\tt I\!I\!I}, {\tt I'}, or {\tt I\!I\!I'},
with the same $c$ and correct $\bar{c}$ to glue them smoothly.
In this case, the {\small SS-CMC} hypersurface will range {$i^+$}, {\tt I\!I}, {\tt I} (see Figure~\ref{CMCinRNpaperIVP2} (B)),
or {\tt I'}, {\tt I\!I}, {$i^+$}, or {$i^-$}, {\tt I\!I'}, {\tt I}, or {\tt I'}, {\tt I\!I'}, {$i^-$},
or {\tt I\!I\!I}, {\tt I\!I}, {$i^+$}, or {$i^+$}, {\tt I\!I}, {\tt I\!I\!I'},
or {\tt I\!I\!I}, {\tt I\!I'}, {$i^-$}, or {$i^-$}, {\tt I\!I'}, {\tt I\!I\!I'}.
\item[(C)] If $c>C_H$ (or $c<c_H$), $f(r)$ is defined on $(r_{-},r_{+})$ in region {\tt I\!I} (or {\tt I\!I'}),
Such {\small SS-CMC} hypersurface will touch coordinate singularities $r=r_{-}$ or $r=r_{+}$,
and we can find an {\small SS-CMC} hypersurface in region {\tt I}, {\tt I\!I\!I}, {\tt I'}, or {\tt I\!I\!I'},
with the same $c$ and correct $\bar{c}$ to glue them smoothly.
In this case, the {\small SS-CMC} hypersurface will range {\tt I\!I\!I}, {\tt I\!I}, {\tt I} (see Figure~\ref{CMCinRNpaperIVP2} (C)),
or {\tt I'}, {\tt I\!I}, {\tt I\!I\!I'} , or {\tt I'}, {\tt I\!I'}, {\tt I\!I\!I'}, or {\tt I\!I\!I}, {\tt I\!I'}, {\tt I}.
\item[(D)] Besides {\small SS-CMC} hypersurfaces range over different regions,
there are another types of {\small SS-CMC} hypersurfaces, called cylindrical hypersurfaces.
These hypersurfaces are of the form $(t,r=r_0,\theta,\phi)$, where $r_0\in(r_{-},r_{+})$ in region {\tt I\!I} (see Figure~\ref{CMCinRNpaperIVP2} (D))
or region {\tt I\!I'}
with constant mean curvature
\begin{align*}
H(r)=\frac{\pm1}{3\sqrt{-h(r)}}\left(\frac{2h(r)}{r}+\frac{h'(r)}{2}\right)=\frac{\pm(2r^2-3Mr+e^2)}{3r^2\sqrt{-r^2+2Mr-e^2}}.
\end{align*}
\end{itemize}

\begin{figure}[h]
\centering
\psfrag{X}{$X$}
\psfrag{T}{$T$}
\psfrag{A}{\tt I}
\psfrag{B}{\tt I\!I}
\psfrag{C}{\tt I\!I\!I}
\psfrag{a}{\tt I'}
\psfrag{b}{\tt I\!I'}
\psfrag{c}{\tt I\!I\!I'}
\psfrag{i}{\footnotesize$i^{-}$}
\psfrag{j}{\footnotesize$i^{0}$}
\psfrag{I}{\footnotesize$i^{+}$}
\psfrag{P}{$r=0$}
\psfrag{Q}{$r=r_{+}$}
\psfrag{R}{$r=r_{-}$}
\psfrag{S}{$r=\infty$}
\psfrag{p}{\tiny (A)}
\psfrag{q}{\tiny (B)}
\psfrag{r}{\tiny (C)}
\psfrag{s}{\tiny (D)}
\includegraphics[height=76mm, width=62mm]{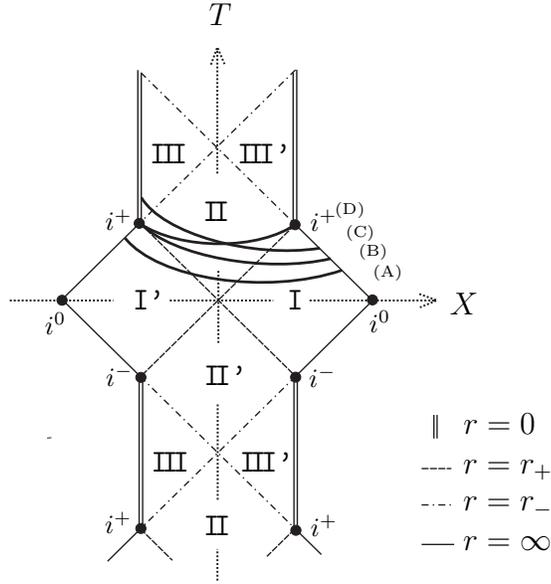}
\caption{Constant mean curvature hypersurfaces with different parameters $c$ are illustrated.
Here we show an example in each case (A), (B), (C), and (D).} \label{CMCinRNpaperIVP2}
\end{figure}

From the above discussion, we can state and prove the theorem about the existence and uniqueness of the initial value problem for the
{\small SS-CMC} equation in the maximally extended Reissner-Nordstr\"{o}m spacetime.
\begin{thm} \label{IVP}
Given $H\in\mathbb{R}$, a point $(T_0,X_0)$, and a value $V$ with $1-V^2>0$ in the Penrose diagram of the Reissner-Nordstr\"{o}m spacetime,
there is a unique function $T=T(X)$ such that $T(X_0)=T_0$, $T'(X_0)=V$, and the hypersurface
$\Sigma:(T=T(X),X,\theta,\phi)$ is spacelike {\rm(}$1-(T'(X))^2)>0${\rm)} with constant mean curvature $H$.
\end{thm}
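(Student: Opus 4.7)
The plan is to reduce the global initial value problem to the region-by-region ODE analysis performed earlier in this section and then glue the local solutions across the coordinate singularities $r=r_\pm$ via the SS-CMC classification (A)--(D). First I would translate the Penrose-diagram data $(T_0,X_0,V)$ into standard Schwarzschild-type coordinates. The point $(T_0,X_0)$ lies in exactly one of the six regions (horizon/bifurcation cases being handled as limits), where a Kruskal-type coordinate change produces $r_0$, $t_0=f(r_0)$, and a definite value of $f'(r_0)$ with a definite sign prescribed by $V$. The spacelike hypothesis $1-V^2>0$ transforms into $\frac{1}{h}-(f')^2h>0$ at $r_0$, which is precisely the condition $l(r_0)>1$ in the notation above. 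The explicit first-integral formula for $f'(r)$ then inverts uniquely: $f'(r_0)$ together with its sign determines $l(r_0)$, the defining equation for $l(r)$ determines $c$ uniquely, and $f(r_0)=t_0$ pins down the remaining constant $\bar c$. Away from $r=r_\pm$ the SS-CMC equation is a smooth non-degenerate second-order ODE, so classical existence-uniqueness yields a unique $C^2$ solution on the maximal subinterval of the regular range of $r$ compatible with this $c$, which is explicitly identified by the discussion following the definition of $F(H,r)$ and $G(H,r)$.

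Next I would extend past every coordinate singularity using (A)--(D). Depending on how $c$ compares with the thresholds $C_H$ and $c_H$, the solution either escapes to $i^\pm$ (case (B), where $|f|\to\infty$ and no further extension is required) or it reaches $r=r_\pm$ at a definite point of the Penrose diagram; in the latter case the adjoining region contains, by (A) or (C), a unique SS-CMC hypersurface sharing the same $c$ whose free parameter $\bar c$ can be adjusted so that the two pieces coincide in Penrose coordinates, giving a smooth continuation. Iterating this gluing process produces the global solution $T=T(X)$, and the spacelike condition $1-(T'(X))^2>0$ is preserved because it is equivalent region by region to $l(r)>1$.

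The main obstacle is precisely the matching at $r=r_\pm$, where the SS-CMC equation degenerates and classical ODE theory no longer applies directly. The resolution already built into the classification is that the integration constant $c$ is a shared invariant of the two adjoining regions, so the continuation must use the \emph{same} $c$; the freedom in the other constant $\bar c$ is then pinned down by smooth matching in Penrose coordinates, which simultaneously guarantees the existence of the extension and its uniqueness, and hence global uniqueness of $T(X)$.
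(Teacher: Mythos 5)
Your proposal follows essentially the same route as the paper's proof: reduce to the region-wise ODE in standard coordinates, use the explicit first integral to see that the initial data $(T_0,X_0,V)$ uniquely fix the parameters $c$ and $\bar c$, and then invoke the classification (A)--(D) to glue uniquely across $r=r_\pm$ with the same $c$ and the matching-determined $\bar c$. The paper's argument is just a terser version of this, so your write-up is a faithful (and somewhat more detailed) rendering of the intended proof.
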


\begin{proof}
The analyses (A)--(D) indicate that the {\small SS-CMC} equation in the standard coordinates in each region is solvable,
and the initial values $T(X_0)=T_0$ and $T'(X_0)=V$ will uniquely determine $c$ and $\bar{c}$ at the
{\small SS-CMC} hypersurface containing $(T_0,X_0)$.
For {\small SS-CMC} hypersurfaces in different regions,
in order to smoothly glue together, their $c$ and $\bar{c}$ are also uniquely determined.
Finally, since there is a one-to-one correspondence between the standard coordinates in each region and the Penrose diagram,
the theorem is described in the maximally extended Reissner-Nordstr\"{o}m spacetime sense and is proved.
\end{proof}

\section{Dirichlet problem for the SS-CMC equation in the Reissner-Nordstr\"{o}m spacetime} \label{DirichletProblem}
One application of Theorem~\ref{IVP} is to answer the Dirichlet problem for the {\small SS-CMC}
equation in the extended Reissner-Nordstr\"{o}m spacetime.
Here we only focus on the Dirichlet problem with symmetric boundary data because our goal is to construct
$T$-axisymmetric {\small SS-CMC} foliations in next section.

Before describing the Dirichlet problem,
let us go back to the construction (A) and (D) in section~\ref{IVPSSCMC} to collect all $T$-axisymmetric {\small SS-CMC} hypersurfaces.
An {\small SS-CMC} hypersurface $\Sigma$ is determined by two parameters $c$ and $\bar{c}$.
The effect of the parameter $\bar{c}$ in the standard coordinates $(t,r,\theta,\phi)$
means the translation of an {\small SS-CMC} hypersurface $\Sigma$ along the $t$-direction,
and it presents the Lorentzian isometry in the Penrose diagram.
In case (A), given $H\in\mathbb{R}$ and $c\in(c_H,C_H)$, among the family of {\small SS-CMC} hypersurfaces $\{\Sigma_{H,c,\bar{c}}\}_{\bar{c}\in\mathbb{R}}$,
there is a unique hypersurface with $T$-axisymmetric.
We call it {\small TSS-CMC} hypersurface and use the notation $\Sigma_{H,c}$ to represent it.

In the construction (D), if $c=C_H$, the only {\small TSS-CMC} hypersurface is the cylindrical hypersurface $(t,r=R_H,\theta,\phi)$
in region {\tt I\!I}. Similarly, if $c=c_H$, the only {\small TSS-CMC} hypersurface is the cylindrical hypersurface
$(t,r=r_H,\theta,\phi)$ in region {\tt I\!I'}.

Thus, we know all {\small TSS-CMC} hypersurfaces in the extended Reissner-Nordstr\"{o}m spacetime.
Conversely, given two symmetric boundary data in the extended Reissner-Nordstr\"{o}m spacetime,
Dirichlet problem will ask that whether there is a unique {\small TSS-CMC} hypersurface satisfying the boundary data.
Precisely, we have the following theorem.
\begin{thm} \label{DirichletTheorem}
Given $H\in\mathbb{R}$ and boundary data $(T_0,X_0,\theta,\phi)$, $(T_0,-X_0,\theta,\phi)$ in the maximally extended Reissner-Nordstr\"{o}m spacetime,
there exists a unique {\small TSS-CMC} hypersurface $\Sigma:(T=T(X),X,\theta,\phi)$ with mean curvature $H$
satisfying the boundary value conditions $T(X_0)=T(-X_0)=T_0$.
\end{thm}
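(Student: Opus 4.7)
The plan is to exploit the one-parameter classification of TSS-CMC hypersurfaces collected in the opening of Section~\ref{DirichletProblem}: for each $c\in\mathbb{R}$, there is a unique TSS-CMC hypersurface $\Sigma_{H,c}$ with mean curvature $H$, covering cases (A) for $c\in(c_H,C_H)$, the cylindrical hypersurfaces of (D) for $c=c_H$ or $c=C_H$, and case (C) for $c\notin[c_H,C_H]$. Since $T$-axisymmetry forces $T(-X_0)=T(X_0)$ automatically, the symmetric boundary data collapses to the single scalar condition that $\Sigma_{H,c}$ meets $X=X_0$ at height $T_0$. The problem thus reduces to showing that this single condition uniquely determines $c$.

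For existence, I would define $\mathcal{T}(c)$ to be the $T$-coordinate at which $\Sigma_{H,c}$ intersects the vertical line $X=X_0$ in the Penrose diagram, and show that as $c$ ranges over $\mathbb{R}$ the value $\mathcal{T}(c)$ attains every real number. The arguments of Section~\ref{IVPSSCMC} give an explicit description of where $\Sigma_{H,c}$ lives in the extended spacetime for each regime of $c$; using this I would partition the $c$-line into intervals associated with the topological types (A), (C), and the critical cylinders (D), verify continuity of $\mathcal{T}$ across the transitions at $c=c_H$ and $c=C_H$, and check monotonicity of $\mathcal{T}$ in each piece via the explicit expression of $f'(r)$ and the way the parameter $\bar{c}$ is pinned down by $T$-axisymmetry. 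Monotonicity combined with the behavior of $\mathcal{T}$ at the ends $c\to\pm\infty$ then gives surjectivity.

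Uniqueness would follow from Theorem~\ref{IVP} as follows: if two distinct $c_1\ne c_2$ both produced TSS-CMC hypersurfaces through $(T_0,X_0)$, then by $T$-axisymmetry they would also both pass through $(T_0,-X_0)$ with slopes of opposite sign to those at $X_0$. Were their slopes at $(T_0,X_0)$ equal, Theorem~\ref{IVP} would immediately force $\Sigma_{H,c_1}=\Sigma_{H,c_2}$, contradicting $c_1\ne c_2$; were they distinct, the monotonicity of $\mathcal{T}$ established in the existence step rules this out as well. The main obstacle I foresee is the existence step, specifically checking continuity and monotonicity of $\mathcal{T}(c)$ across the critical values $c=c_H$ and $c=C_H$, where the qualitative shape of $\Sigma_{H,c}$ changes (ranging between configurations that touch $i^{\pm}$, the cylinders $r=R_H,r_H$, and the different Reissner--Nordstr\"om blocks), so that the one-parameter family genuinely sweeps out the whole right half $X>0$ of the Penrose diagram without gaps.
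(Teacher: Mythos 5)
There is a genuine gap, and it is concentrated in your uniqueness step. You reduce uniqueness to ``the monotonicity of $\mathcal{T}$ established in the existence step,'' but that monotonicity is never actually established --- it is listed in your existence step as something you ``would check,'' and it is essentially equivalent to the uniqueness statement itself (two distinct TSS-CMC hypersurfaces with the same $H$ through $(T_0,X_0)$ is exactly a failure of injectivity of $\mathcal{T}$). Theorem~\ref{IVP} only handles the case of equal slopes at $(T_0,X_0)$, which is the easy case; the case of distinct slopes is the whole difficulty, and deferring it to an unproven monotonicity claim makes the argument circular. The paper closes this by an entirely different mechanism: it takes two putative solutions $\Sigma^1\ll\Sigma^2$, perturbs $\Sigma^2$ within the family of TSS-CMC hypersurfaces through the same boundary points to one with mean curvature $H+\varepsilon$, and applies a maximum-principle computation for the Lorentzian distance function $\mathrm{d}_{\Sigma^1}|_{\Sigma}$ (using the timelike convergence condition and $\overline{\mathrm{Hess}}(\mathrm{d}_{\Sigma^1})(q;\nu,\nu)=0$ at the maximum) to reach $0\geq 3\varepsilon>0$. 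Some comparison-geometry input of this kind, or a genuine proof of monotonicity of the shooting map, is needed; your proposal contains neither.

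Your existence step also rests on a faulty parameterization of the TSS-CMC family by $c\in\mathbb{R}$. The case (C) hypersurfaces with $c\notin[c_H,C_H]$ run from the singularity $r=0$ to spatial infinity through {\tt I\!I\!I}, {\tt I\!I}, {\tt I} and are \emph{not} $T$-axisymmetric --- the paper collects TSS-CMC hypersurfaces only from cases (A) and (D). Moreover, for a fixed $c\in(c_H,C_H)$ the horizontal line $c=\mathrm{const}$ meets the loop formed by $F(H,\cdot)$ and $G(H,\cdot)$ in two points, giving two distinct TSS-CMC hypersurfaces (different throat radii, possibly in different regions), so $\mathcal{T}(c)$ is not single-valued; and the cylinders at $c=c_H, C_H$ stay inside {\tt I\!I} or {\tt I\!I'} and never reach the line $X=X_0$ when $(T_0,X_0)$ lies in region {\tt I}, so your family degenerates there rather than transitioning continuously. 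The correct single-valued parameter is the loop parameter $s$ running over all periods (as in the proof of Theorem~\ref{CMCfoliationFixH}), and even then surjectivity onto all of $T\in\mathbb{R}$ requires passing through infinitely many Reissner--Nordstr\"{o}m blocks. The paper sidesteps all of this by shooting instead in the initial slope $V\in(-1,1)$ at the fixed point $(T_0,X_0)$, where the endpoints of the family (future and past null cones) trap $(T_0,-X_0)$ for free, and then proving a posteriori that the solution found is $T$-axisymmetric via the throat-radius argument.
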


\begin{proof}
The idea to prove Theorem~\ref{DirichletTheorem} is similar to the Schwarzschild spacetime \cite[pages 9--15]{KWL}.
First of all, we use the shooting method to get the existence of the Dirichlet problem.
Given $H\in\mathbb{R}$, $V$ with $1-V^2>0$, and $(T_0,X_0,\theta,\phi)$,
consider the family of hypersurfaces $\{\Sigma_{H,c(V),\bar{c}(T_0,X_0)}\}_{1-V^2>0}$.
This family collects {\small SS-CMC} hypersurfaces with mean curvature $H$, $T(X_0)=T_0$, and $T'(X_0)=V$.
In this notation $\{\Sigma_{H,c(V),\bar{c}(T_0,X_0)}\}_{1-V^2>0}$,
we just remark that the parameter $c$ is determined by $V$, $\bar{c}$ is determined by $T_0$ and $X_0$,
and $\{\Sigma_{H,c(V),\bar{c}(T_0,X_0)}\}_{1-V^2>0}$ is continuously varied when $V$ changes.

When $V$ tends to $1$, $\Sigma_{H,c(V),\bar{c}(T_0,X_0)}$ tends to some future null cone.
When $V$ goes to $-1$, $\Sigma_{H,c(V),\bar{c}(T_0,X_0)}$ approaches to some past null cone.
Since the other boundary data $(T_0,-X_0,\theta,\phi)$ lies between these future null cone and past null cone,
by the Intermediate Value Theorem,
there is an {\small SS-CMC} hypersurface $\Sigma$ in the family$\{\Sigma_{H,c(V),\bar{c}(T_0,X_0)}\}_{1-V^2>0}$ satisfying $T(-X_0)=T_0$.

This {\small SS-CMC} hypersurface $\Sigma$ is in fact symmetric with respect to the $T$-axis.
We can argue it by contradiction.
Suppose that $\Sigma$ is not $T$-axisymmetric.
Consider its $T$-axisymmetric reflection hypersurface called $\tilde{\Sigma}$.
Two hypersurfaces $\Sigma$ and $\tilde{\Sigma}$ have different parameters $c$
because they have different slopes at the boundary point.
On the other hand, the $T$-axisymmetric reflection does not change the radius of the ``throat'',
that is, the extreme value of the $r$-component function restricted on an {\small SS-CMC} hypersurface.
From the construction of {\small SS-CMC} hypersurfaces (A) and (D) in section~\ref{IVPSSCMC},
we know that the radius of the throat is determined by $c$, so $\Sigma$ and $\tilde{\Sigma}$ must share the same parameter $c$, and it leads to the contradiction.

Now we will give a proof of the uniqueness part.
First, consider the family of hypersurfaces $\{\Sigma_{H,c,\bar{c}}|T(X_0)=T(-X_0)=T_0\}_{H\in\mathbb{R}}$.
This family collects all {\small TSS-CMC} hypersurfaces passing through $(T_0,X_0,\theta,\phi)$ and $(T_0,-X_0,\theta,\phi)$.
Hypersurfaces in this family are continuously changed with respect to $H$.
Next, we argue the uniqueness of the Dirichlet problem by contradiction.
Suppose not, that is, given $H\in\mathbb{R}$, there are two {\small TSS-CMC} hypersurfaces $\Sigma^1:(T=T^1(X),X,\theta,\phi)$ and
$\Sigma^2:(T=T^2(X),X,\theta,\phi)$ satisfying $T^1(X_0)=T^2(X_0)=T_0$, $T^1(-X_0)=T^2(-X_0)=T_0$, and $T^1(X)\not\equiv T^2(X)$ in $(-X_0,X_0)$.
Without loss of generality, assume that two hypersurfaces have causality relation $\Sigma^1\ll\Sigma^2$ in $(-X_0,X_0)$.
We slightly perturb $\Sigma^2$ in the family $\{\Sigma_{H,c,\bar{c}}|T(X_0)=T(-X_0)=T_0\}_{H\in\mathbb{R}}$ to $\Sigma$ so that
$\Sigma^1\ll\Sigma$ in $(-X_0,X_0)$, and $\Sigma$ has constant mean curvature $H+\varepsilon$ for some $\varepsilon>0$.

Consider $\mathrm{d}_{\Sigma^1}|_{\Sigma}:\Sigma\to[0,\infty]$ the Lorentzian distance function with respect to $\Sigma^1$ restricted on $\Sigma$.
We refer \cite{AHP} or \cite[page 7]{KWL} for more discussions on this distance function.
One property is that the Laplacian of $\mathrm{d}_{\Sigma^1}|_{\Sigma}$, called $\Delta_{\Sigma}(\mathrm{d}_{\Sigma^1}|_{\Sigma})$, at each point $q\in\Sigma$
is related to the Laplacian of $\mathrm{d}_{\Sigma^1}$ with respect to the ambient spacetime, the mean curvature of $\Sigma$,
and the Hessian of $\mathrm{d}_{\Sigma^1}$ applying on the unit normal part $\nu$.
More precisely, we have the following equality:
\begin{align*}
\Delta_{\Sigma}(\mathrm{d}_{\Sigma^1}|_{\Sigma})(q)
=\overline{\Delta}\mathrm{d}_{\Sigma^1}(q)+\overline{\mathrm{Hess}}(\mathrm{d}_{\Sigma^1})(q;\nu,\nu)
+3H_{\Sigma}(q)\sqrt{1+|\nabla(\mathrm{d}_{\Sigma^1}|_{\Sigma})(q)|^2}.
\end{align*}

Since $\Sigma^1\neq\Sigma$, we know that the maximum value of $\mathrm{d}_{\Sigma^1}|_{\Sigma}$ is positive and will achieve at some point $q\in\Sigma$.
Since the Reissner-Nordstr\"{o}m spacetime satisfies the timelike convergence condition,
which implies $\overline{\Delta}\mathrm{d}_{\Sigma^1}(q)\geq -3H_{\Sigma^1}(p)$ (see \cite{AHP} or \cite[page 7]{KWL}),
where $p$ is the orthogonal projection of $q$ on $\Sigma^1$,
we get at the maximum point $q$,
\begin{align*}
0\geq\Delta_{\Sigma}(\mathrm{d}_{\Sigma^1}|_{\Sigma})(q)
&\geq\overline{\mathrm{Hess}}(\mathrm{d}_{\Sigma^1})(q;\nu,\nu)+3H_{\Sigma}(q)\sqrt{1+|\nabla(\mathrm{d}_{\Sigma^1}|_{\Sigma})(q)|^2}-3H_{\Sigma^1}(p) \\
&=3H_{\Sigma}(q)-3H_{\Sigma^1}(p)=3\varepsilon>0.
\end{align*}
It leads to a contradiction.
Remark that we use the result $\overline{\mathrm{Hess}}(\mathrm{d}_{\Sigma^1})(q;\nu,\nu)=0$
in the above discussion because of the perpendicular property of maximum distance on two submanifolds \cite[Proposition 2]{KWL}.
Therefore, the uniqueness of the Dirichlet problem is proved.
\end{proof}

\section{Constant mean curvature foliations in the Reissner-Nordstr\"{o}m spacetime} \label{CMCfoliationConstruction}
In this section, we will construct two types of $T$-axisymmetric, spacelike, spherically symmetric, constant mean curvature ({\small TSS-CMC})
foliations in the maximally extended Reissner-Nordstr\"{o}m spacetime.
One foliation has the same constant mean curvature for every hypersurface,
and the other foliation has increasing constant mean curvatures when observing hypersurfaces along the $T$-direction.

Recall that a {\small TSS-CMC} foliation in the extended Reissner-Nordstr\"{o}m spacetime is a family of hypersurfaces
$\{\Sigma_{s}\}$ satisfying (i) every $\Sigma_{s}$ is a {\small TSS-CMC} hypersurface, (ii) for any $s_1\neq s_2$, $\Sigma_{s_1}$ and $\Sigma_{s_2}$
are disjoint, and (iii) $\cup_s\Sigma_s$ covers the whole spacetime.

\begin{thm} \label{CMCfoliationFixH}
Given $H\in\mathbb{R}$, there is a {\small TSS-CMC}
foliation in the maximally extended Reissner-Nordstr\"{o}m spacetime so that every hypersurface has the mean curvature $H$.
\end{thm}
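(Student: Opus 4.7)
The plan is to exploit the one-parameter family of TSS-CMC hypersurfaces $\{\Sigma_{H,c}\}$ from Section~\ref{IVPSSCMC} to foliate a fundamental cell of the Penrose diagram, and then to copy this foliation via the discrete vertical translation isometry of the maximally extended spacetime. First I would fix $H\in\mathbb{R}$ and catalog the TSS-CMC hypersurfaces by the value of the parameter $c$: for $c\in(c_H,C_H)$ each $\Sigma_{H,c}$ (case (A)) is a complete hypersurface that crosses the $T$-axis and sweeps a triple of regions such as {\tt I'}, {\tt I\!I}, {\tt I}; the endpoints $c=C_H$ and $c=c_H$ give the two cylindrical TSS-CMC hypersurfaces $r=R_H$ in region {\tt I\!I} and $r=r_H$ in region {\tt I\!I'} respectively (case (D)); and for $c$ outside $[c_H,C_H]$ the hypersurfaces (cases (B) and (C)) bridge neighboring blocks through the horizons. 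Continuous dependence of $\Sigma_{H,c}$ on $c$ follows from the explicit formula for $f'(r)$ already written down in Section~\ref{IVPSSCMC}.

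Next I would establish that within one cell these hypersurfaces are pairwise disjoint. If two TSS-CMC hypersurfaces with the same $H$ but distinct $c$ shared a point off the $T$-axis, then by $T$-axisymmetry they would share a second point symmetric about the $T$-axis, giving two distinct solutions of the Dirichlet problem for mean curvature $H$ and symmetric boundary data and contradicting Theorem~\ref{DirichletTheorem}. An intersection on the $T$-axis itself is ruled out by Theorem~\ref{IVP} applied with $V=0$. Surjectivity onto a single cell is then obtained by tracking the $T$-coordinate of the throat of $\Sigma_{H,c}$ as a continuous function of $c$ and observing that in the limits $c\to C_H^{-}$ and $c\to c_H^{+}$ the slices converge to the cylindrical ones that bound the cell; together with the case (B) slices at the exact thresholds and the case (C) slices for $c$ outside $[c_H,C_H]$, the family sweeps out every point of the cell.

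Finally I would extend the foliation to the whole spacetime. The maximally extended Reissner-Nordstr\"{o}m spacetime is an infinite chain of isometric copies of the fundamental cell glued along the horizons $r=r_\pm$, and it carries a discrete vertical Penrose-diagram translation as an isometry. Applying this translation to the one-cell foliation produces a candidate global family of TSS-CMC hypersurfaces, each with mean curvature $H$. The main obstacle, and the most delicate step, is the matching across horizons: a hypersurface exiting one cell through $r=r_\pm$ must agree with the continuation of the appropriate translated slice of the adjoining cell. This is exactly where Theorem~\ref{IVP} is invoked, since any slice passing through a horizon point with prescribed tangent is uniquely determined by its initial data; once the translation parameter $\bar{c}$ is fixed at one horizon crossing, the two pieces glue smoothly into a single global TSS-CMC hypersurface. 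Verifying this matching case by case (A), (B), (C), (D), so that the global union has no overlaps and leaves no gaps along either horizon $r=r_+$ or $r=r_-$, is the bulk of the work in the actual proof.
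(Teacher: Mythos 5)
There is a genuine gap, and it is precisely the device the paper's proof is built around: you try to label the leaves of the foliation by the single parameter $c$, but $c$ does not faithfully parameterize the $T$-axisymmetric slices. For fixed $H$, a horizontal line $c=\mathrm{const}$ with $c\in(c_H,C_H)$ meets the graphs of $F(H,\cdot)$ and $G(H,\cdot)$ in several points, and each such point --- i.e.\ each admissible throat radius, on either branch and on either side of the extrema $R_H$, $r_H$ --- gives a \emph{different} TSS-CMC hypersurface with the same $c$ (different $T$-intercept, throat in region {\tt I\!I} versus {\tt I\!I'}). So ``the $T$-coordinate of the throat as a continuous function of $c$'' is not a function, and your surjectivity argument via the limits $c\to C_H^{-}$, $c\to c_H^{+}$ does not organize the slices into a single sweep of the cell; note also that the cylinders $r=R_H$ and $r=r_H$ are \emph{interior} leaves, not leaves bounding the cell (the cell is bounded by the two slices with throat at the $r=r_-$ bifurcation points, which share the value $c=Hr_{-}^3$ but are distinct hypersurfaces). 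The paper's resolution is to parameterize not by $c$ but by a periodic parameter $s$ running counterclockwise around the closed loop formed by the graphs of $F(H,r)$ and $G(H,r)$ in the $rc$-plane; the $r$-component of $\gamma(s)$ records the throat radius, hence the $T$-intercept, and Theorem~\ref{IVP} turns this into a one-to-one correspondence with TSS-CMC slices. Coverage is then obtained from the \emph{existence} half of Theorem~\ref{DirichletTheorem} (any symmetric pair of points in the cell lies on some TSS-CMC slice, which by uniqueness must be one of the $\Sigma_s$), not from an intermediate-value argument in $c$.

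A second error: you include the case (B) and case (C) hypersurfaces ($c=C_H$, $c=c_H$, and $c$ outside $[c_H,C_H]$) in the foliating family. These are not $T$-axisymmetric --- case (B) slices run off to $i^{\pm}$ and case (C) slices join a region {\tt I\!I\!I}-type block to a region {\tt I}-type block monotonically --- and the paper explicitly builds the foliation only from cases (A) and (D); adding (B) and (C) slices would destroy disjointness. Your disjointness argument for genuine TSS-CMC leaves (a second intersection point forced by symmetry contradicts Dirichlet uniqueness, an intersection on the $T$-axis contradicts Theorem~\ref{IVP}) matches the paper's, and the final copy-and-paste by the discrete translation isometry is also the paper's step; but the horizon-matching you flag as ``the bulk of the work'' is already done in Section~\ref{IVPSSCMC} when the individual slices are constructed, and is not where the difficulty of this theorem lies.
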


\begin{proof}
Given $H\in\mathbb{R}$, consider the closed loop formed by the graphs of $(r,F(H,r))$ and $(r,G(H,r))$ in the $rc$-plane, where
$F(H,r)=Hr^3+r(-r^2+2Mr-e^2)^{\frac12}$ and $G(H,r)=Hr^3-r(-r^2+2Mr-e^2)^{\frac12}$ are defined in section~\ref{IVPSSCMC}.
Figure~\ref{CMCinRNpaperDomain} illustrates the closed loop when $H>0$ in the $rc$-plane.
We parameterize this closed loop by $\gamma(s)$, where $s\in\mathbb{R}$, $\gamma(0)=(r_{-},F(H,r_{-}))=(r_{-},G(H,r_{-}))$,
and when $s$ increases, the loop $\gamma(s)$ goes counterclockwise.
Since $\gamma(s)$ is a closed loop, we can further set the parameter $s$ with period $T>0$ so that $\gamma(s+kT)=\gamma(s)$ for all $k\in\mathbb{Z}$.

\begin{figure}[h]
\centering
\psfrag{X}{$X$}
\psfrag{T}{$T$}
\psfrag{A}{\tt I}
\psfrag{B}{\tt I\!I}
\psfrag{C}{\tt I\!I\!I}
\psfrag{a}{\tt I'}
\psfrag{b}{\tt I\!I'}
\psfrag{c}{\tt I\!I\!I'}
\psfrag{i}{\footnotesize$i^{-}$}
\psfrag{j}{\footnotesize$i^{0}$}
\psfrag{I}{\footnotesize$i^{+}$}
\psfrag{P}{$r=0$}
\psfrag{Q}{$r=r_{+}$}
\psfrag{R}{$r=r_{-}$}
\psfrag{S}{$r=\infty$}
\psfrag{r}{$r$}
\psfrag{y}{$c$}
\psfrag{M}{\footnotesize$\gamma(0)$}
\psfrag{N}{\footnotesize$\gamma(s_1)$}
\psfrag{Y}{\footnotesize$\gamma(s_2)$}
\psfrag{Z}{\footnotesize$\gamma(s_3)$}
\psfrag{W}{\footnotesize$\gamma(T)$}
\psfrag{m}{\footnotesize$\Sigma_0$}
\psfrag{n}{\footnotesize$\Sigma_{s_1}$}
\psfrag{e}{\footnotesize$\Sigma_{s_2}$}
\psfrag{z}{\footnotesize$\Sigma_{s_3}$}
\psfrag{w}{\footnotesize$\Sigma_T$}
\includegraphics[height=120mm, width=152mm]{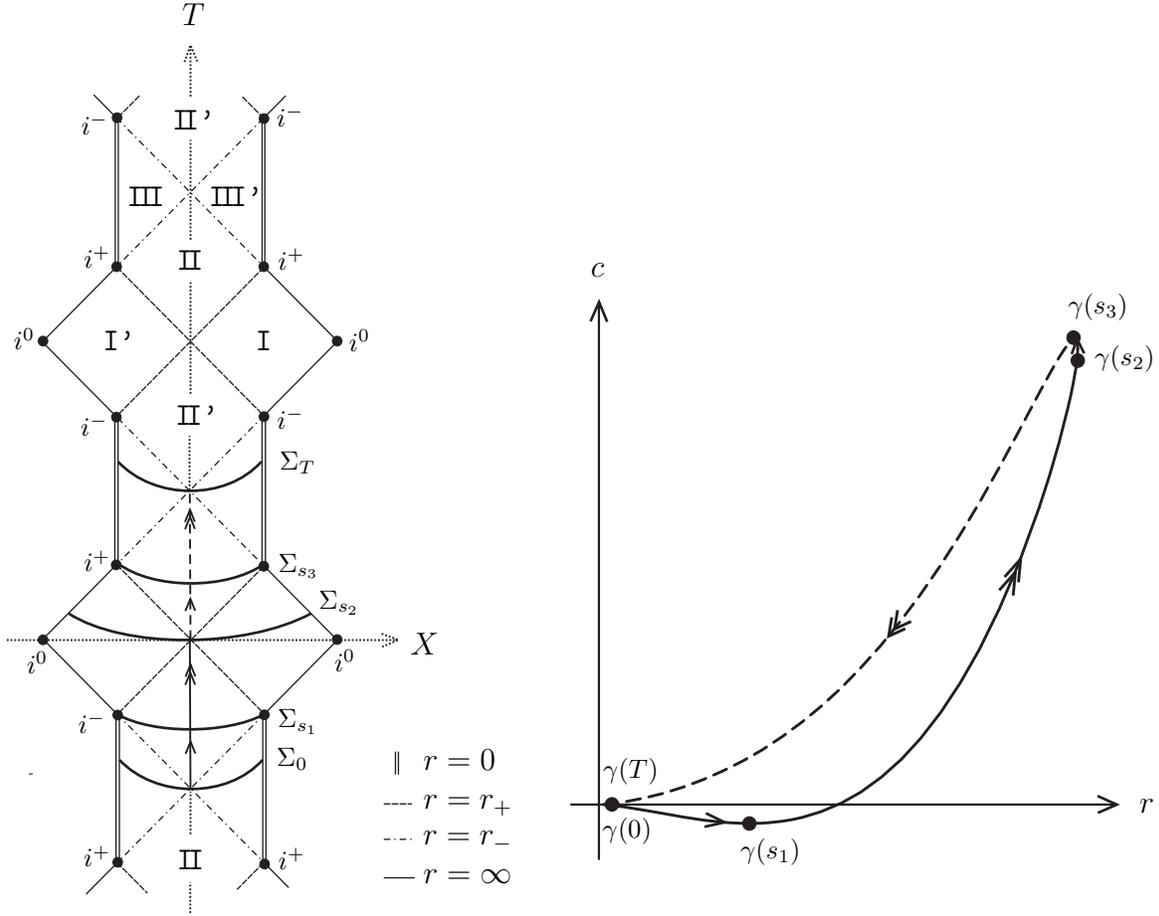}
\caption{Two graphs of functions $(r,F(H,r))$ and $(r,G(H,r))$ form a closed loop in the case $H>0$.
This closed loop is parameterized by $\gamma(s), s\in\mathbb{R}$ with period $T>0$.
Every $\gamma(s)$ corresponds to a {\small TSS-CMC} hypersurface in the extended  Reissner-Nordstr\"{o}m spacetime
so that we construct a {\small TSS-CMC} foliation.} \label{CMCinRNpaperDomain}
\end{figure}

First, we choose a pair of Reissner-Nordstr\"{o}m spacetimes.
One Reissner-Nordstr\"{o}m spacetime consists of regions {\tt I}, {\tt I\!I}, {\tt I\!I\!I},
and another adjacent Reissner-Nordstr\"{o}m spacetime consists of regions {\tt I'}, {\tt I\!I'}, {\tt I\!I\!I'},
and they contain $(T,X,\theta,\phi)=(0,0,\theta,\phi)$.
We will say that for $0\leq s\leq T$,
there is a one-to-one correspondence between every point of $\gamma(s)$
and a {\small TSS-CMC} hypersurface $\Sigma_s$ in this spacelike extension of a pair of Reissner-Nordstr\"{o}m spacetimes.
This is because $r$-component of $\gamma(s)$ tells us the radius of the ``throat" of the {\small TSS-CMC} hypersurface, that is,
the $T$-intercept of the {\small TSS-CMC} hypersurface in the Penrose diagram.
From the theorem of the initial value problem for the {\small SS-CMC} equation,
an {\small SS-CMC} hypersurface is determined by the position of the throat on $T$-axis,
and the uniqueness also implies that this {\small SS-CMC} hypersurface is $T$-axisymmetric.
Hence we establish the one-to-one correspondence relation.

See Figure~\ref{CMCinRNpaperDomain}.
Here we point out some special points on the loop and their correspondences.
The point $\gamma(0)$ corresponds to a {\small TSS-CMC} hypersurface $\Sigma_0$ at the bottom of the pair of Reissner-Nordstr\"{o}m spacetimes.
When $s$ increases, $\gamma(s)$ goes the loop counterclockwise, and the $T$-intercept of the {\small TSS-CMC} hypersurface in Penrose diagram is increasing.
The point $\gamma(s_1)$ achieves the minimum value of the function $G(H,r)$,
so it corresponds to the cylindrical hypersurface $r=r_H$ in region {\tt I\!I'}, called $\Sigma_{s_1}$.
The $r$-component of $\gamma(s_2)$ is $r=r_{+}$ so that $T$-intercept of the corresponding {\small TSS-CMC}
hypersurface $\Sigma_{s_2}$ is $T=0$.
The point $\gamma(s_3)$ achieves the maximum value of the function $F(H,r)$,
so it corresponds to the cylindrical hypersurface $r=R_H$ in region {\tt I\!I}, called $\Sigma_{s_3}$.
The point $\gamma(T)$ corresponds to a {\small TSS-CMC} hypersurface $\Sigma_T$ at the top of the pair of Reissner-Nordstr\"{o}m spacetimes.
Notice that $\Sigma_0$ and $\Sigma_T$ have the same shape but with different $T$-intercepts in the Penrose diagram.

Consider $\cup_{0\leq s\leq T}\Sigma_s$,
where $\Sigma_s$ is the {\small TSS-CMC} hypersurface which corresponds to the point $\gamma(s)$,
then $\cup_{0\leq s\leq T}\Sigma_t$ will foliate the Reissner-Nordstr\"{o}m spacetimes between $\Sigma_0$ and $\Sigma_T$.
The argument is as follows:
(i) Suppose that $(T_0,X_0,\theta,\phi)$ and $(T_0,-X_0,\theta,\phi)$ lie between $\Sigma_0$ and $\Sigma_T$.
By the existence of the Dirichlet problem for the {\small SS-CMC} equation, there exists a {\small TSS-CMC} hypersurface $\Sigma$ passing through them.
Next, $\Sigma$ must have the causality relation that $\Sigma_0\ll \Sigma\ll \Sigma_T$.
Suppose not, say $\Sigma_0\not\ll \Sigma$ for example, then they intersect at some symmetric boundary points or at $X=0$.
The former case will contradict to the uniqueness of the Dirichlet problem for the {\small SS-CMC} equation
and the later case will contradict to the uniqueness of the initial value problem for the {\small SS-CMC} equation.
(ii) Furthermore, this causality argument are also applied to prove that any two hypersurfaces $\Sigma_{s_1}$ and $\Sigma_{s_2}$ in the family are disjoint.

Finally, we copy the family of hypersurfaces $\{\Sigma_s\}_{0\leq s\leq T}$ and paste it to other pairs of the Reissner-Nordstr\"{o}m spacetimes.
In other words, consider $\{\Sigma_s\}_{(k-1)T\leq s\leq kT}$, where $k\in\mathbb{Z}$,
then $\cup_{(k-1)T\leq s\leq kT}\Sigma_t$ will foliate the Reissner-Nordstr\"{o}m spacetimes between $\Sigma_{(k-1)T}$ and $\Sigma_{kT}$.
Therefore, $\cup_{s\in\mathbb{R}}\Sigma_s$ is a {\small TSS-CMC} foliation in the maximally extended Reissner-Nordstr\"{o}m spacetime.
\end{proof}

If we want to construct a {\small TSS-CMC} foliation with varied constant mean curvature in each slice,
the copy and paste method in Theorem~\ref{CMCfoliationFixH} does not work.
However, one-to-one correspondence between a {\small TSS-CMC} hypersurface with mean curvature $H$ and a point on the graph of
$F(H,r)$ or $G(H,r)$ still holds and this is a crucial observation.
Here we will view $F(H,r)$ and $G(H,r)$ as two variables functions,
and we will piecewisely analyze this {\small CMC} foliation problem.

Look at the function $F(H,r)$ first.
When $H$ goes over all real numbers, the range of $F(H,r)$ will cover the strip $r_{-}\leq r\leq r_{+}$ in the $rc$-plane.
In other words, given a point $(r_0,c_0)$ in the strip $r_{-}\leq r\leq r_{+}$,
there is a unique mean curvature $H_0\in\mathbb{R}$ such that $c_0=F(H_0,r_0)$.
Notice that $c_0=F(H_0,r_0)$ will determine the $T$-intercept of a {\small TSS-CMC} hypersurface with mean curvature $H_0$ in region {\tt I\!I}.

In order to construct a {\small TSS-CMC} foliation with varied $H$ in the spacelike extension of the region {\tt I\!I},
we have to find a curve $\gamma(c)$ in the strip $r_{-}\leq r\leq r_{+}$ satisfying the following properties:
\begin{itemize}
\item[(a)] For fixed $H$, $\gamma(c)$ intersects the graph of $F(H,r)$ only once.
This property will imply that any two points on the curves correspond to two {\small TSS-CMC} hypersurfaces with different constant mean curvatures.
\item[(b)] The curve $\gamma(c)$ is a graph of some monotonic function in the $rc$-plane.
The monotonicity property will imply that the constant mean curvature $H(c)$ of corresponding {\small TSS-CMC} hypersurface is monotonic along the
$T$-direction (with respect to $c$).
\item[(c)] In order to get a {\small TSS-CMC} foliation on the whole spacetime, we need to continuously glue the curve $\gamma(c)$
to another curves corresponding to {\small TSS-CMC} hypersurfaces in different regions,
so we require some behavior of the curve when $r$ tends to $r_{-}$ or $r_{+}$.
\end{itemize}
Based on these observations, we derive Proposition~\ref{Proposition1}.
We put the proof of Proposition~\ref{Proposition1} in Appendix, and we label each equation according to the required properties listed above.

\begin{prop} \label{Proposition1}
For the function $F(H,r)=Hr^3+r(-r^2+2Mr-e^2)^{\frac12}$ where $r\in[r_{-},r_{+}]$ and $H\geq 0$,
there exists a function $y(r)$ defined on $[r_{-},r_{+}]$ satisfying
\begin{align}
\left\{
\begin{array}{ll}
\mathrm{(a)} & \displaystyle\frac{\mathrm{d}y}{\mathrm{d}r}\neq\frac{3y}{r}-\frac{(-r^2+3Mr-2e^2)}{(-r^2+2Mr-e^2)^{\frac12}} \\[3mm]
\mathrm{(b)} & \displaystyle\frac{\mathrm{d}y}{\mathrm{d}r}<0 \mbox{ for all } r\in(r_{-},r_{+}) \\[2mm]
\mathrm{(c)} & \displaystyle\lim\limits_{r\to r_{-}}y(r) \mbox{ and } \lim\limits_{r\to r_{+}}y(r)=y(r_+)\geq 0 \mbox{ are finite}.
\end{array}
\right. \label{Conditions}
\end{align}
\end{prop}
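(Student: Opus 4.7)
The plan is to rephrase condition (a) as a condition on a natural auxiliary function, and then exhibit an explicit ansatz for $y$ and verify the three conditions.

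First I would set $H(r) := (y(r) - F(0,r))/r^3$, so that $y(r) = F(H(r), r)$. A short computation using $F(0, r) = r(-r^2+2Mr-e^2)^{1/2}$ yields the identity
\[
r^4 H'(r) \;=\; ry'(r) - 3y(r) + \frac{r(-r^2+3Mr-2e^2)}{(-r^2+2Mr-e^2)^{1/2}}.
\]
Hence (a) is equivalent to $H'(r) \neq 0$ on $(r_-, r_+)$, which geometrically says the curve $(r, y(r))$ meets each graph $c = F(H_0, r)$ in exactly one point.

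With this reformulation, I would try the ansatz $y(r) = A\sqrt{r_+ - r} + B$ with constants $A > 0$ and $B \geq 0$ to be chosen. Conditions (b) and (c) are immediate: $y'(r) = -A/(2\sqrt{r_+-r}) < 0$, $y(r_+) = B \geq 0$, and $y(r_-) = A\sqrt{r_+-r_-} + B$ is finite. Substituting the ansatz into the identity and multiplying by $\sqrt{r_+-r}$ gives
\[
r^4 H'(r) \sqrt{r_+-r} \;=\; V(r) - 3B\sqrt{r_+-r}, \qquad V(r) := A\bigl(\tfrac{5r}{2} - 3r_+\bigr) + \frac{W(r)}{\sqrt{r-r_-}},
\]
with $W(r) := r(-r^2 + 3Mr - 2e^2)$. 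Since $-3B\sqrt{r_+-r} \leq 0$, it suffices to show $V(r) < 0$ on $(r_-, r_+]$, which then forces $H'(r) < 0$ and in particular (a).

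The main obstacle is this estimate on $V$. The first summand satisfies $A(\tfrac{5r}{2} - 3r_+) \leq -Ar_+/2 < 0$ on $[r_-, r_+]$, with equality at $r = r_+$. For the second, using $r_\pm^2 = 2Mr_\pm - e^2$ one computes $W(r_-) = r_-(Mr_- - e^2) = -r_-\sqrt{M^2-e^2}\,(M - \sqrt{M^2-e^2}) < 0$ and $W(r_+) = r_+(Mr_+ - e^2) > 0$. Consequently $W(r)/\sqrt{r-r_-} \to -\infty$ as $r \to r_-^+$ and is negative in a right neighbourhood of $r_-$, while it is continuous (hence bounded) on $[r_- + \varepsilon, r_+]$ for any $\varepsilon > 0$. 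Therefore $S := \sup_{r \in (r_-, r_+]} W(r)/\sqrt{r-r_-}$ is finite, and choosing $A > 2S/r_+$ yields
\[
V(r) \leq A\bigl(\tfrac{5r}{2} - 3r_+\bigr) + S \leq -\tfrac{Ar_+}{2} + S < 0.
\]
This proves (a) and completes the construction; since $H' < 0$ with $H(r_+) = B/r_+^3 \geq 0$, the corresponding $H(r)$ is automatically nonnegative on $[r_-, r_+]$, consistent with the standing assumption $H \geq 0$. I expect the Appendix to carry out the quantitative control of $S$ (or an equivalent direct estimate of $V$) that exhibits an explicit admissible $A$.
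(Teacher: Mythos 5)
Your proposal is correct, and it reaches the same strategic reduction as the paper by a different concrete construction. Your reformulation of (a) as $H'(r)\neq 0$ for $H(r)=(y(r)-F(0,r))/r^3$ is exactly the inverse-function viewpoint underlying the paper's proof: there, one seeks $y$ with $y'-\tfrac{3y}{r}=-\tfrac{(-r^2+3Mr-2e^2)}{(-r^2+2Mr-e^2)^{1/2}}-p(r)$ for some $p>0$, and in your notation $p(r)=-r^3H'(r)$, so both proofs upgrade ``$\neq$'' to a strict one-sided inequality and then tune a large constant. The difference is in how $y$ is produced. The paper integrates the linear ODE with integrating factor $r^{-3}$, taking $p=q\,\chi_{(r_-,r_*)}+Cr^{\alpha}$ with $\alpha<-1$ and $C$ large, and must then check convergence of the resulting improper integral to get (c). You instead posit the closed-form ansatz $y=A\sqrt{r_+-r}+B$, for which (b) and (c) are immediate, and verify (a) directly; the key computational point --- which I checked, including the identity $r^4H'(r)=ry'-3y+\tfrac{r(-r^2+3Mr-2e^2)}{(-r^2+2Mr-e^2)^{1/2}}$, the factorization $-r^2+2Mr-e^2=(r_+-r)(r-r_-)$ that cancels the singularity at $r_+$, and the signs $W(r_\pm)=r_\pm(Mr_\pm-e^2)\lessgtr 0$ --- is that the only remaining blow-up sits at $r_-$ with the favorable sign, so $S=\sup W(r)/\sqrt{r-r_-}$ is finite and $A>2S/r_+$ suffices. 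Your route is more elementary and fully explicit (no indicator functions or improper integrals), at the price of an ansatz-specific estimate; the paper's integral construction is less explicit but parametrizes a whole family of admissible curves by the source $p$ and transfers verbatim to Proposition~2, where your ansatz would need the mirrored form $A\sqrt{r-r_-}+B$ with the analogous sign analysis at $r_+$. Your closing remark that $H(r)\geq H(r_+)=B/r_+^3\geq 0$ follows from $H'<0$ correctly reconciles the construction with the standing hypothesis $H\geq 0$.
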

Next, we look at the foliation in region {\tt I\!I'}.
When $H$ goes over all real numbers, the range of $G(H,r)$ is also the strip $r_{-}\leq r\leq r_{+}$ in the $rc$-plane.
That is, given a point $(r_1,c_1)$ in the strip $r_{-}\leq r\leq r_{+}$, there is $H_1\in\mathbb{R}$ such that $c_1=G(H_1,r_1)$,
and $G(H_1,r_1)$ will determine the $T$-intercept of a {\small TSS-CMC} hypersurface with mean curvature $H_1$ in region {\tt I\!I'}.

In order to construct a {\small TSS-CMC} foliation with varied $H$ in the spacelike extension of the region {\tt I\!I'},
we have to find a curve $\gamma(c)$ in the strip $r_{-}\leq r\leq r_{+}$ satisfying the properties (a) (change $F(H,r)$ to $G(H,r)$), (b), and (c).
We formulate these properties as Proposition~\ref{Proposition2} and the proof can be found in Appendix.
\begin{prop} \label{Proposition2}
For the function $G(H,r)=Hr^3-r(-r^2+2Mr-e^2)^{\frac12}$ where $r\in[r_{-},r_{+}]$ and $H\geq 0$,
there exists a function $y(r)$ defined on $[r_{-},r_{+}]$ satisfying
\begin{align}
\left\{
\begin{array}{ll}
\mathrm{(a)} & \displaystyle\frac{\mathrm{d}y}{\mathrm{d}r}\neq\frac{3y}{r}+\frac{(-r^2+3Mr-2e^2)}{(-r^2+2Mr-e^2)^{\frac12}} \\[3mm]
\mathrm{(b)} & \displaystyle\frac{\mathrm{d}y}{\mathrm{d}r}>0 \mbox{ for all } r\in(r_{-},r_{+}) \\[2mm]
\mathrm{(c)} & \displaystyle\lim\limits_{r\to r_{-}}y(r)=y(r_{-})>0 \mbox{ and } \lim\limits_{r\to r_{+}}y(r) \mbox{ are finite}.
\end{array}
\right. \label{Conditions2}
\end{align}
\end{prop}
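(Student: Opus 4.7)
The plan is to construct $y(r)$ explicitly by solving a linear first-order ODE whose right-hand side is chosen to ``beat'' the slope function $Q(r):=(-r^2+3Mr-2e^2)/(-r^2+2Mr-e^2)^{1/2}$. The starting observation is that, after eliminating $H$ via $Hr^3 = y + r(-r^2+2Mr-e^2)^{1/2}$, one has $\partial_r G(H,r) = 3y/r + Q(r)$; hence (a) is exactly the condition that the graph of $y$ is transverse to every level curve $y = G(H,r)$.

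Before the construction I would record the endpoint behaviour of $Q$. Using $r_\pm^2 = 2Mr_\pm - e^2$ and $r_+r_- = e^2$, the numerator evaluates to $Mr_\pm - e^2 = r_\pm(M - r_\mp)$, which is positive at $r_+$ and negative at $r_-$; since the denominator vanishes at both endpoints, $Q$ is continuous on $(r_-,r_+)$ with $\lim_{r\to r_-^+}Q(r) = -\infty$ and $\lim_{r\to r_+^-}Q(r) = +\infty$. In particular $Q$ attains every real value on $(r_-,r_+)$, so if $y'-3y/r$ stays bounded on $[r_-,r_+]$ it must equal $Q$ somewhere by the Intermediate Value Theorem; a viable $y$ must therefore have $y'$ blow up at $r_+$.

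The construction: fix $\epsilon>0$ and $y_0>0$, set $g(r):=\max\{Q(r),0\}+\epsilon$, and let $y$ solve the linear ODE $y'(r)-3y(r)/r=g(r)$ with $y(r_-)=y_0$. The integrating factor $r^{-3}$ gives
\begin{equation*}
y(r) = r^3\left(\frac{y_0}{r_-^3} + \int_{r_-}^{r}\frac{g(s)}{s^3}\,\mathrm{d}s\right).
\end{equation*}
By construction $g>Q$ strictly and $g>0$; since $g(s)=O((r_+-s)^{-1/2})$ near $r_+$ is integrable (and $g$ is bounded elsewhere), the integral converges on $[r_-,r_+]$ and $y$ extends continuously to $r_+$ with a finite value. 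Verification of (a)--(c) is then routine: (a) from $y'-3y/r=g>Q$ strictly; (b) from $y'=g+3y/r$ together with $y_0>0$ and $g>0$ forcing $y>0$ throughout; and (c) by evaluation at the endpoints.

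The main obstacle is the delicate balance near $r_+$: the function $y'-3y/r$ must blow up fast enough to stay strictly above $Q\to+\infty$, yet $y'$ must blow up slowly enough for $y$ to remain finite at $r_+$. Both are possible only because the $(r_+-r)^{-1/2}$ blow-up rate built into $Q$ by the double-root structure of $h(r)$ is integrable. The same mechanism underlies the companion Proposition~\ref{Proposition1} with the roles of $r_-$ and $r_+$ (and the sign of the second term in (a)) interchanged, so the two propositions admit essentially parallel proofs; the piecewise choice $g=Q^+ +\epsilon$ employed here would be replaced by its sign-mirrored analogue in the $F$-case.
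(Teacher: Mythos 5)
Your proof is correct and follows essentially the same route as the paper's: both reduce the problem to the linear ODE $y'-\frac{3y}{r}=Q(r)+p(r)$ with a nonnegative perturbation $p$, integrate with the factor $r^{-3}$, and use the integrable $(r_+-r)^{-1/2}$ singularity of $Q$ at $r_+$ to conclude that $y$ has a finite limit there. The only difference is the choice of forcing term: the paper takes $p(r)=q(r)\,\chi_{(r_-,r_*)}(r)$ with $q(r)=(-2r^2+3Mr-e^2)(-r^2+2Mr-e^2)^{-1/2}$, whereas you take $g=\max\{Q,0\}+\epsilon$, i.e.\ $p=\max\{-Q,0\}+\epsilon$, which is strictly positive on all of $(r_-,r_+)$ and hence yields the strict inequality in (a) everywhere, not only where the paper's $p$ is nonzero.
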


Now we are ready to construct a {\small TSS-CMC} foliation with varied mean curvature in such slice.
\begin{thm} \label{CMCvariedHfoliation}
There is a {\small TSS-CMC} foliation $\{\Sigma_c:(T=T_{c}(X),X,\theta,\phi)\}_{c\in\mathbb{R}}$
in the maximally extended Reissner-Nordstr\"{o}m spacetime such that constant mean curvature
$H(c)$ is increasing along the $T$-direction {\rm(}with respect to $c${\rm)}.
Moreover, this family of {\small TSS-CMC} hypersurfaces is symmetric with respect to the $X$-axis, that is, $T_{-c}(X)=-T_{c}(X)$,
and the mean curvature ranges from $-\infty$ to $\infty$.
\end{thm}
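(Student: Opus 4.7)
The plan is to convert the monotonic curves supplied by Propositions~\ref{Proposition1} and~\ref{Proposition2} into a single one-parameter family of TSS-CMC hypersurfaces inside one pair of Reissner-Nordstr\"om spacetimes, then extend by iteration up and down the infinite tower of blocks, and finally fill in the $c<0$ half using the isometry $T\mapsto -T$, which reverses mean curvature. The dictionary, parallel to the fixed-$H$ case, is that a TSS-CMC hypersurface with throat at $r=r_0$ in region~{\tt I\!I} corresponds to the point $(r_0,F(H,r_0))$ on the graph of $F$, so a curve $c=y(r)$ in the strip $r_-\le r\le r_+$ determines, for each $r$, both the throat position and, via $y(r)=F(H,r)$, the mean curvature $H(r)=y(r)/r^3-\sqrt{-r^2+2Mr-e^2}/r^2$. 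A direct computation yields
\[
\frac{\mathrm{d}H}{\mathrm{d}r}=\frac1{r^3}\left(\frac{\mathrm{d}y}{\mathrm{d}r}-\frac{3y}{r}+\frac{-r^2+3Mr-2e^2}{\sqrt{-r^2+2Mr-e^2}}\right),
\]
so condition~(a) of Proposition~\ref{Proposition1} is exactly $\mathrm{d}H/\mathrm{d}r\neq 0$; the analogous identity holds in~{\tt I\!I'} with Proposition~\ref{Proposition2}. Combined with (b), the assignment $r\mapsto H$ is strictly monotonic along each curve.

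I would first assemble the foliation inside a single pair of Reissner-Nordstr\"om spacetimes (the one containing $T=0$). The curve $y_F$ of Proposition~\ref{Proposition1} parameterizes TSS-CMC hypersurfaces with throat in~{\tt I\!I} and $y_G$ of Proposition~\ref{Proposition2} those with throat in~{\tt I\!I'}. Condition~(c) provides finite endpoint values at $r=r_\pm$, and Theorem~\ref{IVP} then extends each hypersurface uniquely across the horizons into regions {\tt I}, {\tt I'}, {\tt I\!I\!I}, {\tt I\!I\!I'}. Matching endpoints of the two curves at the horizon transitions yields a continuous, $H$-monotonic family of pairwise disjoint TSS-CMC hypersurfaces foliating that pair.

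Next I would iterate over the infinite tower: on each successive pair the curve data are shifted in $c$ and $H$ so that monotonicity is preserved across the inter-pair boundary, sending $H\to+\infty$ going up and $H\to-\infty$ going down. The required symmetry $T_{-c}(X)=-T_c(X)$ is then obtained by applying the isometry $T\mapsto -T$ (which sends $H\mapsto -H$) to the $c>0$ portion, with $\Sigma_0$ taken to be the $X$-axis itself, the unique $T$-axisymmetric hypersurface with $H=0$. Pairwise disjointness and covering follow exactly as in the proof of Theorem~\ref{CMCfoliationFixH}: distinct parameters yield distinct $(H,c)$ data and cannot intersect without violating the uniqueness in Theorems~\ref{IVP} and~\ref{DirichletTheorem}, while any point of the spacetime lies on some TSS-CMC hypersurface by the Dirichlet existence.

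The main obstacle will be the bookkeeping at the horizons $r=r_\pm$ and between adjacent pairs. I must check that the endpoint values and slopes produced by $y_F$ and $y_G$ are compatible with the smooth gluing rules of Section~\ref{IVPSSCMC} (cases (A)--(D)), so that no hypersurface develops a corner when it crosses a horizon, and that the inter-pair shifts in $c$ can be chosen so that $H(c)$ has neither gaps nor repeated values. The finiteness and sign conditions in Proposition~\ref{Proposition1}(c) and Proposition~\ref{Proposition2}(c) are what make this matching possible; without them the constructed family would either fail to cover the spacetime or fail to be a foliation.
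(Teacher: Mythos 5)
Your proposal is correct and follows essentially the same route as the paper: reduce to the half-spacetime $T\geq 0$, use the $rc$-plane correspondence to translate the foliation conditions into the curve properties of Propositions~\ref{Proposition1} and~\ref{Proposition2}, alternately concatenate the curves $y_F$ and $y_G$ with matched endpoint values at $r=r_\pm$ (where $F$ and $G$ coincide), and recover the $c<0$ half by the reflection $T\mapsto -T$. Your explicit identity showing that condition~(a) is precisely $\mathrm{d}H/\mathrm{d}r\neq 0$ is a welcome clarification of what the paper leaves implicit, and the ``bookkeeping'' you flag is handled in the paper exactly by the endpoint conditions in part~(c) of the two propositions.
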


\begin{proof}
It suffices to find a family of {\small TSS-CMC} hypersurfaces $\{\Sigma_c\}_{c\geq 0}$
with nonnegative mean curvature, and $H(c)$ is increasing from $0$ to $\infty$ with respect to $c$,
and $\{\Sigma_c\}_{c\geq 0}$ foliates the maximally extended Reissner-Nordstr\"{o}m spacetime on the region $T\geq 0$.
By the $X$-axis reflection, that is, consider $\{\Sigma_{-c}:(T=T_{-c}(X)\stackrel{\tiny\mbox{def.}}{=}-T_{c}(X),X,\theta,\phi)\}_{c\geq 0}$,
then $\{\Sigma_{c}\}_{c\geq 0}\cup\{\Sigma_{-c}\}_{c\geq 0}$ will foliate the maximally extended Reissner-Nordstr\"{o}m spacetime.

Taking $\Sigma_0:(T=T_0(X)\equiv 0, X,\theta,\phi)$,
then $\Sigma_0$ is a {\small TSS-CMC} hypersurface with mean curvature $H=0$,
and $\Sigma_0$ will correspond to $\gamma(0)=(r_+,c=F(0,r_{+})=0)$ in the $rc$-plane.
Starting from $c=F(0,r_{+})=0$, by Proposition~\ref{Proposition1},
there exists a function $y_1(r)$ satisfying (\ref{Conditions}) and $\lim\limits_{r\to r_{+}}y_1(r)=F(0,r_{+})=0$.
Let $c_1=\lim\limits_{r\to r_{-}}y_1(r)$,
then there exists $H=H(c_1)$ such that $c_1=F(H(c_1),r_{-})=G(H(c_1),r_{-})$.
The graph of $y_1(r)$ is denoted by $\gamma_1(c)=(r,c=y_1(r))$.

Next, from $c_1=G(H(c_1),r_{-})$, by Proposition~\ref{Proposition2},
there exists a function $y_2(r)$ satisfying (\ref{Conditions2}) and $\lim\limits_{r\to r_{-}}y_2(r)=c_1$.
Let $c_2=\lim\limits_{r\to r_{+}}y_2(r)$,
then there exists $H=H(c_2)$ such that $c_2=G(H(c_2),r_{+})=F(H(c_2),r_{+})$.
The graph of $y_2(r)$ is denoted by $\gamma_2(c)=(r,c=y_2(r))$.

We continuous this process.
For $k\in\mathbb{N}$, from the value $c_{2k}$,
by Proposition~\ref{Proposition1},
there exists a function $y_{2k+1}(r)$ satisfying (\ref{Conditions}) and $\lim\limits_{r\to r_{+}}y_{2k+1}(r)=c_{2k}$.
The graph of $y_{2k+1}(r)$ is denoted by $\gamma_{2k+1}(c)=(r,c=y_{2k+1}(r))$.
Let $c_{2k+1}=\lim\limits_{r\to r_{-}}y_{2k+1}(r)$.
From the value $c_{2k+1}$,
by Proposition~\ref{Proposition2},
there exists a function $y_{2k+2}(r)$ satisfying (\ref{Conditions2}) and $\lim\limits_{r\to r_{-}}y_{2k+2}(r)=c_{2k+1}$.
The graph of $y_{2k+2}(r)$ is denoted by $\gamma_{2k+2}(c)=(r,c=y_{2k+2}(r))$, and let $c_{2k+2}=\lim\limits_{r\to r_{+}}y_{2k+2}(r)$.

Let $\gamma(c)=\cup_{i=1}^\infty\gamma_i(c)$,
then the corresponding {\small TSS-CMC} hypersurfaces $\{\Sigma_c\}_{c\geq 0}$ will foliate the Reissner-Nordstr\"{o}m spacetime with $T\geq 0$.

\begin{figure}[h]
\centering
\psfrag{r}{$r$}
\psfrag{c}{$c$}
\psfrag{A}{$\gamma_1(c)$}
\psfrag{B}{$\gamma_2(c)$}
\psfrag{P}{$r=r_{-}$}
\psfrag{Q}{$r=r_{+}$}
\includegraphics[height=62mm, width=30mm]{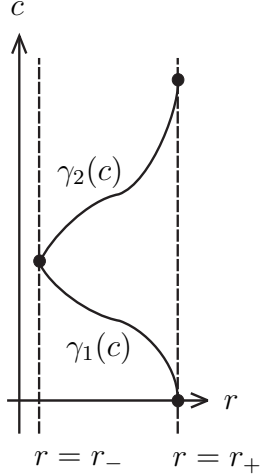}
\caption{We first find a curve $\gamma_1(c)=(r,c=y_1(r))$ satisfying Proposition~\ref{Proposition1} and $y_1(r_+)=0$,
then find another curve $\gamma_2(c)=(r,c=y_2(r))$ satisfying Proposition~\ref{Proposition2} and $y_2(r_-)=y_1(r_{-})$.
We continue the process to get $\cup_{i=1}^\infty\gamma_i(c)$ and the corresponding {\small TSS-CMC}
hypersurfaces will foliate the extended Reissner-Nordstr\"{o}m spacetime on $T\geq 0$.
By $X$-axis reflection, we will get a {\small TSS-CMC} foliation on the whole spacetime.} \label{CMCinRNpaperFoliation}
\end{figure}
\end{proof}

\newpage
\section{Conclusions} \label{Conclusion}
The Reissner-Nordstr\"{o}m spacetime and the Schwarzschild spacetime share the same metric form
$\mathrm{d}s^2=-h(r)\,\mathrm{d}t^2+\frac{1}{h(r)}\,\mathrm{d}r^2+r^2(\mathrm{d}\theta^2+\sin^2\theta\,\mathrm{d}\phi^2)$,
where $h(r)=1-\frac{2M}{r}+\frac{e^2}{r^2}$ and $h(r)=1-\frac{2M}{r}$, respectively.
When the charge smaller than the mass,
it is natural to ask {\small CMC} related questions in the Reissner-Nordstr\"{o}m spacetime metric
when we know fruitful results on the {\small CMC} properties in the Schwarzschild spacetime.
This paper will discuss the initial value problem, Dirichlet problem for the {\small CMC} equation,
and {\small CMC} foliations with fixed mean curvature or varied mean curvature in each slice in the Reissner-Nordstr\"{o}m spacetime.

In Schwarzschild case \cite{LL1}, {\small SS-CMC} equations and their solutions in standard coordinates can be written in terms of $h(r)$,
so solutions of the {\small SS-CMC} equation in the Reissner-Nordstr\"{o}m spacetime are similarly derived by changing $h(r)$ to $1-\frac{2M}{r}+\frac{e^2}{r^2}$.
Two {\small SS-CMC} hypersurfaces can be smoothly glued at every coordinate singularity by checking the blow up order and the next order of the function $f(r)$,
the solution of the {\small SS-CMC} equations, which is similar to the Schwarzschild case.
The only difference in the Reissner-Nordstr\"{o}m spacetime case is that we have to take care of {\small SS-CMC}
hypersurfaces in regions {\tt I\!I\!I} and {\tt I\!I\!I'}, and our analyses also work in these regions.
Thus, the initial value problem is solved in both existence and uniqueness.

Dirichlet problem for the {\small SS-CMC} equation in the Reissner-Nordstr\"{o}m spacetime is also solvable after proving the initial value problem.
Shooting method also works to prove the existence part.
For uniqueness part, geometric analysis method provides a powerful tool.
We can argument it by contradiction: For different {\small TSS-CMC} hypersurfaces satisfying the Dirichlet problem,
consider the maximal timelike geodesic between two hypersurfaces,
then the Laplacian of the Lorentzian distance function of this geodesic gives the relation between two mean curvatures of hypersurfaces.
We can find the inequality is not consistent.
This argument can be applied to the spacetime with timelike convergence condition.

To show the {\small TSS-CMC} foliation properties in the Reissner-Nordstr\"{o}m spacetime with fixed mean curvature,
they are highly related to the existence and uniqueness of the Dirichlet problem for {\small SS-CMC} equations with symmetric boundary data.
This is because the existence part is equivalent that hypersurfaces cover the spacetime.
The uniqueness part is equivalent that any two hypersurfaces are disjoint.
This equivalent statement is also established in the Schwarzschild spacetime case \cite[Theorem 8]{KWL}.

Since the extended Reissner-Nordstr\"{o}m spacetime consists of infinity many pairs of Reissner-Nordstr\"{o}m spacetimes,
for {\small TSS-CMC} foliation with fixed mean curvature, we can choose the {\small TSS-CMC} hypersurfaces with periodicity.
However,
we have to be careful to choose special {\small TSS-CMC} hypersurfaces with varied mean curvature to foliate the extended Reissner-Nordstr\"{o}m spacetime.
This {\small TSS-CMC} foliation problem is solved because we can find a one-to-one correspondence for every {\small TSS-CMC} hypersurface to the point in the
$rc$-plane with $r_{-}\leq r\leq r_{+}$,
which detects the position of the {\small TSS-CMC} hypersurface on the $T$-axis.
So we can change every required {\small TSS-CMC} foliation properties to equivalent conditions (a)--(c) in Proposition~\ref{Proposition1}
and Proposition~\ref{Proposition2} and show the existence of a function satisfying (a)--(c).
Thus, {\small TSS-CMC} foliation with varied $H$ is established.

\section*{Appendix}
We will give proofs of Proposition~\ref{Proposition1} and Proposition~\ref{Proposition2} here.
\setcounter{thm}{3}
\begin{prop}
For the function $F(H,r)=Hr^3+r(-r^2+2Mr-e^2)^{\frac12}$ where $r\in[r_{-},r_{+}]$ and $H\geq 0$,
there exists a function $y(r)$ defined on $[r_{-},r_{+}]$ satisfying
\begin{align*}
\left\{
\begin{array}{ll}
\mathrm{(a)} & \displaystyle\frac{\mathrm{d}y}{\mathrm{d}r}\neq\frac{3y}{r}-\frac{(-r^2+3Mr-2e^2)}{(-r^2+2Mr-e^2)^{\frac12}} \\[3mm]
\mathrm{(b)} & \displaystyle\frac{\mathrm{d}y}{\mathrm{d}r}<0 \mbox{ for all } r\in(r_{-},r_{+}) \\[2mm]
\mathrm{(c)} & \displaystyle \lim\limits_{r\to r_{-}}y(r) \mbox{ and } \lim\limits_{r\to r_{+}}y(r)=y(r_{+})\geq 0 \mbox{ are finite}.
\end{array}
\right.
\end{align*}
\end{prop}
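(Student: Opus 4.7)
The key reformulation is that, if we set $H(r):=(y(r)-r\sqrt{-r^2+2Mr-e^2})/r^3$ (the unique mean curvature of the $F(H,\cdot)$-leaf through $(r,y(r))$), then a direct differentiation yields $y'(r)-\phi(r)=r^3\,H'(r)$, where $\phi(r):=\tfrac{3y(r)}{r}-\tfrac{-r^2+3Mr-2e^2}{\sqrt{-r^2+2Mr-e^2}}$ is the right-hand side appearing in (a). Thus (a) is equivalent to $H$ being strictly monotonic on $(r_-,r_+)$. A second ingredient is the algebraic identity $\Theta(r)-\psi(r)=3\sqrt{-r^2+2Mr-e^2}$, where $\Theta(r):=\tfrac{-2r^2+3Mr-e^2}{\sqrt{-r^2+2Mr-e^2}}=\tfrac{\mathrm d}{\mathrm dr}\bigl[r\sqrt{-r^2+2Mr-e^2}\bigr]$ and $\psi(r):=-\tfrac{-r^2+3Mr-2e^2}{\sqrt{-r^2+2Mr-e^2}}$, together with the asymptotic $\Theta(r)\sim B_-/\sqrt{r-r_-}$ as $r\to r_-^+$, with positive constant $B_-:=r_-\sqrt{M^2-e^2}/\sqrt{r_+-r_-}$.

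\textbf{Construction and verification.} For a constant $K>0$ to be fixed, I will set
\[
y(r) \;:=\; \int_r^{r_+}\!\Bigl[-\Theta(s)+\frac{B_-}{\sqrt{s-r_-}}+K\Bigr]\,\mathrm{d}s.
\]
The cancellation of leading singularities at $r_-$ makes $\Theta(s)-B_-/\sqrt{s-r_-}$ bounded near $r_-$, and $\Theta$ has an integrable $-1/\sqrt{r_+-s}$ singularity at $r_+$, so $y$ is continuous on $[r_-,r_+]$ with $y(r_+)=0$ and $y(r_-)$ finite, giving (c). Differentiating gives $y'(r)=\Theta(r)-B_-/\sqrt{r-r_-}-K$; since $\Theta-B_-/\sqrt{\cdot-r_-}$ has a finite supremum $M_3$ on $[r_-,r_+]$ (with limits $0$ at $r_-$ and $-\infty$ at $r_+$), taking $K>M_3$ yields $y'<0$ throughout $(r_-,r_+)$, and hence $y\geq 0$; this is (b). Using the identity above then gives
\[
y'(r)-\phi(r)=3\sqrt{-r^2+2Mr-e^2}-\frac{B_-}{\sqrt{r-r_-}}-K-\frac{3y(r)}{r}.
\]
Because $y\geq 0$ and the first two terms together are bounded above on $[r_-,r_+]$ by some finite $M_2$, enlarging $K$ to $K>\max(M_2,M_3)$ makes the right-hand side strictly negative throughout $(r_-,r_+)$, proving (a).

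\textbf{Main difficulty.} The substantive obstacle is reconciling (a) and (b) simultaneously. Because $\phi$ is continuous and sweeps from $+\infty$ at $r_-$ to $-\infty$ at $r_+$, any bounded choice of $y'$ would be crossed by $\phi$ by the intermediate value theorem, so (a) fails; on the other hand the positive blow-up of $\Theta$ at $r_-$ would force any ``natural'' monotone $y$ with bounded derivative to have $y'>0$ near $r_-$, so (b) fails. Naive attempts such as writing $y(r)=F(H(r),r)$ with $H(r)=A-B\sqrt{r-r_-}$ end up forcing either $Hr^3$ to increase in the interior or $H$ itself to lose monotonicity. The proposed construction resolves both tensions at once: subtracting the singular coefficient $B_-/\sqrt{r-r_-}$ of $\Theta$ keeps $y'$ and $y'-\phi$ simultaneously bounded above, and shifting by a sufficiently large constant $-K$ then pushes both strictly below zero everywhere.
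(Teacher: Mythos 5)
Your proof is correct, and it follows the same overall strategy as the paper's: construct $y$ by explicitly integrating a well-chosen negative derivative backwards from $r_+$, secure (a) by making the deficit $\phi(r)-y'(r)$ strictly one-signed rather than merely nonvanishing, and neutralize the $(r-r_-)^{-1/2}$ blow-up of $\partial F/\partial r$ at $r_-$, which is the only obstruction to having $y'<0$ there. The technical realization differs, though. The paper takes the deficit $p=\phi-y'>0$ as the primary unknown, rewrites (a) as the linear ODE $y'-\tfrac{3y}{r}=-\tfrac{-r^2+3Mr-2e^2}{(-r^2+2Mr-e^2)^{1/2}}-p(r)$, solves it with the integrating factor $r^{-3}$, and then chooses $p(r)=q(r)\chi_{(r_-,r_*)}(r)+Cr^{\alpha}$ (its $q$ is your $\Theta$), so that the entire singular term is cancelled on $(r_-,r_*)$ and a large $C$ forces $y'<0$. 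You instead prescribe $y'=\Theta-B_-/\sqrt{r-r_-}-K$ directly, subtracting only the leading coefficient of the singularity, and verify (a) a posteriori via the identity $\Theta-\psi=3\sqrt{-r^2+2Mr-e^2}$, using $y\ge 0$ to absorb the $-3y/r$ term. Your version avoids the integrating-factor bookkeeping and the indicator function at the price of computing $B_-=r_-\sqrt{M^2-e^2}/\sqrt{r_+-r_-}$ explicitly; I checked that constant, the asymptotics, and the identity, and all are right. One small remark: your construction pins $y(r_+)=0$, which satisfies (c) as literally stated, but the way Proposition~\ref{Proposition1} is invoked in Theorem~\ref{CMCvariedHfoliation} requires $y(r_+)$ to equal an arbitrary prescribed value $c_{2k}\ge 0$; your $y$ accommodates this by simply adding the constant $c_{2k}$, since that leaves $y'$ unchanged, keeps $y\ge 0$, and only makes $y'-\phi$ more negative.
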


\begin{proof}
First of all, we compute
\begin{align*}
\frac{\partial F}{\partial r}(H,r)
=3Hr^2+\frac{(-2r^2+3Mr-e^2)}{(-r^2+2Mr-e^2)^{\frac12}}
=\frac{3y}{r}-\frac{(-r^2+3Mr-2e^2)}{(-r^2+2Mr-e^2)^{\frac12}}.
\end{align*}
Here we replace $H$ with $y$ and $r$ by the relation $y=Hr^3+r(-r^2+2Mr-e^2)^{\frac12}$ in the last equality,
and this will be the right hand side of (a).
In order to find a function $y(r)$ satisfying (a), it suffices to find a function $p(r)>0$ such that
\begin{align*}
\displaystyle\frac{\mathrm{d}y}{\mathrm{d}r}-\frac{3y}{r}=-\frac{(-r^2+3Mr-2e^2)}{(-r^2+2Mr-e^2)^{\frac12}}-p(r).
\end{align*}
When multiplying the integrating factor $\mathrm{e}^{\int-\frac3r\,\mathrm{d}r}=r^{-3}$ on both sides of the above differential equation, it becomes
\begin{align*}
\frac{\mathrm{d}}{\mathrm{d}r}\left(r^{-3}y(r)\right)=-\frac{(-r^2+3Mr-2e^2)}{r^3(-r^2+2Mr-e^2)^{\frac12}}-\frac{p(r)}{r^3}.
\end{align*}
The function $y(r)$ is solved by choosing the initial value of the integration $y(r_{+})\geq 0$:
\begin{align}
y(r)=\frac{y(r_{+})}{r_{+}^3}r^3+r(-r^2+2Mr-e^2)^{\frac12}+r^3\int_{r}^{r_+}\frac{p(x)}{x^3}\,\mathrm{d}x. \label{functiony}
\end{align}
Next, we hope that the function $y(r)$ has property (b), so we compute
\begin{align*}
y'(r)=\frac{3y(r_{+})}{r_{+}^3}r^2+\frac{(-2r^2+3Mr-e^2)}{(-r^2+2Mr-e^2)^{\frac12}}+3r^2\int_{r}^{r_+}\frac{p(x)}{x^3}\,\mathrm{d}x-p(r).
\end{align*}
Notice that the root of $q(r)\stackrel{\tiny\mbox{def.}}{=}\frac{(-2r^2+3Mr-e^2)}{(-r^2+2Mr-e^2)^{\frac12}}=0$ is $r_{*}=\frac{3M+\sqrt{9M^2-8e^2}}{4}$
and it implies that $q(r)>0$ on $(r_{-},r_{*})$ and $q(r)<0$ on $(r_{*},r_{+})$.
Consider
\begin{align*}
p(r)=q(r)\cdot\chi_{(r_{-},r_{*})}(r)+Cr^{\alpha}=\frac{(-2r^2+3Mr-e^2)}{(-r^2+2Mr-e^2)^{\frac12}}\cdot \chi_{(r_{-},r_{*})}(r)+Cr^{\alpha},
\end{align*}
where $\chi_{(r_{-},r_{*})}(r)$ is the characteristic function (indicator function) of the set $(r_{-},r_{*})$, and $\alpha<-1$ is a fixed constant,
and $C$ is a constant to be determined later. Since
\begin{align*}
3r^2\int_{r}^{r_+}\frac{p(x)}{x^3}\,\mathrm{d}x
&=3r^2\int_{r}^{r_+}\frac{-2x^2+3Mx-e^2}{x^3(-x^2+2Mx-e^2)^{\frac12}}\cdot \chi_{(r_{-},r_{*})}(x)\,\mathrm{d}x+3Cr^2\int_{r}^{r_+}x^{\alpha-3}\,\mathrm{d}x \\
&\leq 3C_1r^2-\frac{3Cr^2}{2-\alpha}\left(\frac{1}{r_{+}^{2-\alpha}}-\frac{1}{r^{2-\alpha}}\right),
\end{align*}
where $C_1=\int_{r_{-}}^{r_+}\frac{-2x^2+3Mx-e^2}{x^3(-x^2+2Mx-e^2)^{\frac12}}\cdot\chi_{(r_{-},r_{*})}(x)\,\mathrm{d}x$
is a finite value, we have
\begin{align*}
y'(r)
&\leq\frac{3y(r_{+})}{r_{+}^3}r^2+3C_1r^2-\frac{3Cr^2}{2-\alpha}\left(\frac{1}{r_{+}^{2-\alpha}}-\frac{1}{r^{2-\alpha}}\right)-Cr^{\alpha}
+(1-\chi_{(r_{-},r_{*})}(r))q(r) \\
&=\left(\frac{3y(r_{+})}{r_{+}^3}+3C_1-\frac{3C}{2-\alpha}\frac{1}{r_{+}^{2-\alpha}}\right)r^2+\left(\frac{3}{2-\alpha}-1\right)Cr^{\alpha}
+(1-\chi_{(r_{-},r_{*})}(r))q(r).
\end{align*}
Since $\alpha<-1$, we can choose $C$ large enough so that $y'(r)<0$ for all $r\in(r_{-},r_{+})$.

Finally, since $p(r)$ is of order $O((r-r_{-})^{-\frac12})$, the improper integral in (\ref{functiony}) is finite,
so $\lim\limits_{r\to r_{-}}y(r)$ is finite.
The other limit $\lim\limits_{r\to r_{+}}y(r)=y(r_{+})$ is finite because of the initial value setting in (\ref{functiony}).
\end{proof}

\begin{prop}
For the function $G(H,r)=Hr^3-r(-r^2+2Mr-e^2)^{\frac12}$ where $r\in[r_{-},r_{+}]$ and $H\geq 0$,
there exists a function $y(r)$ defined on $[r_{-},r_{+}]$ satisfying
\begin{align*}
\left\{
\begin{array}{ll}
\mathrm{(a)} & \displaystyle\frac{\mathrm{d}y}{\mathrm{d}r}\neq\frac{3y}{r}+\frac{(-r^2+3Mr-2e^2)}{(-r^2+2Mr-e^2)^{\frac12}} \\[3mm]
\mathrm{(b)} & \displaystyle\frac{\mathrm{d}y}{\mathrm{d}r}>0 \mbox{ for all } r\in(r_{-},r_{+}) \\[2mm]
\mathrm{(c)} & \displaystyle \lim\limits_{r\to r_{-}}y(r)=y(r_{-})>0 \mbox{ and } \lim\limits_{r\to r_{+}}y(r) \mbox{ are finite}.
\end{array}
\right.
\end{align*}
\end{prop}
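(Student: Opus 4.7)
The strategy mirrors the proof of Proposition~\ref{Proposition1} above, with the roles of $r_{-}$ and $r_{+}$ interchanged. First I would verify the algebraic identity
\begin{align*}
\frac{\partial G}{\partial r}(H,r)
&=3Hr^{2}-\frac{-2r^{2}+3Mr-e^{2}}{(-r^{2}+2Mr-e^{2})^{\frac12}}\\
&=\frac{3y}{r}+\frac{-r^{2}+3Mr-2e^{2}}{(-r^{2}+2Mr-e^{2})^{\frac12}},
\end{align*}
obtained by using the defining relation $y=Hr^{3}-r(-r^{2}+2Mr-e^{2})^{\frac12}$ to eliminate $H$; this identifies the right-hand side of condition~(a). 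Hence, to enforce (a) it suffices to construct $y(r)$ solving the first-order linear ODE $\frac{\mathrm{d}y}{\mathrm{d}r}-\frac{3y}{r}=\frac{-r^{2}+3Mr-2e^{2}}{(-r^{2}+2Mr-e^{2})^{\frac12}}+p(r)$ for some strictly positive function $p(r)>0$ to be chosen.

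I would then apply the integrating factor $r^{-3}$, integrating from $r_{-}$ to $r$ so that the initial datum sits at the lower endpoint. Using the antiderivative $\int\frac{-x^{2}+3Mx-2e^{2}}{x^{3}(-x^{2}+2Mx-e^{2})^{\frac12}}\,\mathrm{d}x=-x^{-2}(-x^{2}+2Mx-e^{2})^{\frac12}$ together with $(-r_{-}^{2}+2Mr_{-}-e^{2})^{\frac12}=0$, and prescribing $y(r_{-})>0$ as demanded by (c), the solution is
\begin{align*}
y(r)=\frac{y(r_{-})}{r_{-}^{3}}r^{3}-r(-r^{2}+2Mr-e^{2})^{\frac12}+r^{3}\int_{r_{-}}^{r}\frac{p(x)}{x^{3}}\,\mathrm{d}x.
\end{align*}
A direct differentiation, using $\frac{\mathrm{d}}{\mathrm{d}r}[-r(-r^{2}+2Mr-e^{2})^{\frac12}]=\frac{2r^{2}-3Mr+e^{2}}{(-r^{2}+2Mr-e^{2})^{\frac12}}$, then yields
\begin{align*}
y'(r)=\frac{3y(r_{-})}{r_{-}^{3}}r^{2}+\frac{2r^{2}-3Mr+e^{2}}{(-r^{2}+2Mr-e^{2})^{\frac12}}+3r^{2}\int_{r_{-}}^{r}\frac{p(x)}{x^{3}}\,\mathrm{d}x+p(r).
\end{align*}

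To enforce (b), I analyze $q_{2}(r):=\frac{2r^{2}-3Mr+e^{2}}{(-r^{2}+2Mr-e^{2})^{\frac12}}$. The numerator vanishes uniquely on $(r_{-},r_{+})$ at $r_{**}=\frac{3M+\sqrt{9M^{2}-8e^{2}}}{4}$; using the identity $r_{\pm}^{2}=2Mr_{\pm}-e^{2}$, its endpoint values reduce to $Mr_{\pm}-e^{2}$, which straddle zero with $Mr_{-}-e^{2}<0<Mr_{+}-e^{2}$. Hence $q_{2}<0$ on $(r_{-},r_{**})$ and $q_{2}>0$ on $(r_{**},r_{+})$, mirroring the analogous computation in Proposition~\ref{Proposition1}. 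I would then choose
\begin{align*}
p(r)=-q_{2}(r)\cdot\chi_{(r_{-},r_{**})}(r)+Cr^{\alpha},\qquad\alpha<-1,\ C>0.
\end{align*}
Both $p(r)$ and $p(r)+q_{2}(r)$ are then strictly positive on $(r_{-},r_{+})$, so every summand in the above expression for $y'(r)$ is positive, giving (b). In contrast to Proposition~\ref{Proposition1}, no ``$C$ large enough'' step is needed here, since the four contributions to $y'(r)$ all align in sign. Finally (c) holds because $y(r_{-})>0$ is built in, and since $-q_{2}(x)\in O((x-r_{-})^{-1/2})$ is integrable near $r_{-}$ while $Cr^{\alpha}$ is continuous on $[r_{-},r_{+}]$, the improper integral $\int_{r_{-}}^{r_{+}}p(x)/x^{3}\,\mathrm{d}x$ converges, so $y(r_{+})$ is finite. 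The only substantive obstacle in the plan is the sign bookkeeping for $q_{2}$ on either side of $r_{**}$ and the verification of the antiderivative, both of which are short quadratic-factorization and chain-rule computations parallel to those in Proposition~\ref{Proposition1}.
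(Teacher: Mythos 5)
Your proposal is correct and follows essentially the same route as the paper's proof: the same integrating factor $r^{-3}$, the same explicit formula for $y(r)$ with initial datum at $r_{-}$, and the same sign analysis of $(2r^{2}-3Mr+e^{2})/(-r^{2}+2Mr-e^{2})^{1/2}$ about $r_{**}=\frac{3M+\sqrt{9M^{2}-8e^{2}}}{4}$ (which is the paper's $r_{*}$). The one difference is your extra term $Cr^{\alpha}$ in $p(r)$, which the paper omits in this proposition; this is a harmless and in fact slightly more careful refinement, since it keeps $p$ strictly positive on all of $(r_{-},r_{+})$ so that condition (a) holds everywhere, whereas the paper's choice $p=q\cdot\chi_{(r_{-},r_{*})}$ vanishes on $[r_{*},r_{+})$.
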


\begin{proof}
First of all, we compute
\begin{align*}
\frac{\partial G}{\partial r}(H,r)
=3Hr^2-\frac{(-2r^2+3Mr-e^2)}{(-r^2+2Mr-e^2)^{\frac12}}
=\frac{3y}{r}+\frac{(-r^2+3Mr-2e^2)}{(-r^2+2Mr-e^2)^{\frac12}}.
\end{align*}
Here we replace $H$ with $y$ and $r$ by the relation $y=Hr^3-r(-r^2+2Mr-e^2)^{\frac12}$ in the last equality,
and this will be the right hand side of (a).
In order to find a function $y(r)$ satisfying (a), it suffices to find a function $p(r)>0$ such that
\begin{align*}
\displaystyle\frac{\mathrm{d}y}{\mathrm{d}r}-\frac{3y}{r}=\frac{(-r^2+3Mr-2e^2)}{(-r^2+2Mr-e^2)^{\frac12}}+p(r).
\end{align*}
When multiplying the integrating factor $\mathrm{e}^{\int-\frac3r\,\mathrm{d}r}=r^{-3}$ on both sides of the differential equation, it becomes
\begin{align*}
\frac{\mathrm{d}}{\mathrm{d}r}\left(r^{-3}y(r)\right)=\frac{(-r^2+3Mr-2e^2)}{r^3(-r^2+2Mr-e^2)^{\frac12}}+\frac{p(r)}{r^3}.
\end{align*}
The function $y(r)$ is solved by choosing the initial value of the integration $y(r_{-})>0$:
\begin{align}
y(r)=\frac{y(r_{-})}{r_{-}^3}r^3-r(-r^2+2Mr-e^2)^{\frac12}+r^3\int_{r_{-}}^{r}\frac{p(x)}{x^3}\,\mathrm{d}x. \label{functiony2}
\end{align}
Next, we hope that the function $y(r)$ has property (b), so we compute
\begin{align*}
y'(r)=\frac{3y(r_{-})}{r_{-}^3}r^2-\frac{(-2r^2+3Mr-e^2)}{(-r^2+2Mr-e^2)^{\frac12}}+3r^2\int_{r_{-}}^{r}\frac{p(x)}{x^3}\,\mathrm{d}x+p(r).
\end{align*}
Notice that the root of $q(r)\stackrel{\tiny\mbox{def.}}{=}\frac{(-2r^2+3Mr-e^2)}{(-r^2+2Mr-e^2)^{\frac12}}=0$ is $r_{*}=\frac{3M+\sqrt{9M^2-8e^2}}{4}$
and it implies that $q(r)>0$ on $(r_{-},r_{*})$ and $q(r)<0$ on $(r_{*},r_{+})$.
Consider
\begin{align*}
p(r)=q(r)\cdot\chi_{(r_{-},r_{*})}(r)=\frac{(-2r^2+3Mr-e^2)}{(-r^2+2Mr-e^2)^{\frac12}}\cdot \chi_{(r_{-},r_{*})}(r)\geq 0,
\end{align*}
where $\chi_{(r_{-},r_{*})}(r)$ is the characteristic function (indicator function) of the set $(r_{-},r_{*})$.
Then we have
\begin{align*}
y'(r)=\frac{3y(r_{-})}{r_{-}^3}r^2+3r^2\int_{r_{-}}^{r}\frac{p(x)}{x^3}\,\mathrm{d}x+(\chi_{(r_{-},r_{*})}(r)-1)q(r)>0.
\end{align*}
Finally, since $p(r)$ is of order $O(|r-r_{+}|^{-\frac12})$, the improper integral in (\ref{functiony2}) is finite,
so $\lim\limits_{r\to r_{+}}y(r)$ is finite.
The other limit $\lim\limits_{r\to r_{-}}y(r)=y(r_{-})$ is finite because of the initial value setting in (\ref{functiony2}).
\end{proof}

\section*{Acknowledgments}
The author would like to thank Yng-Ing Lee and Pengzi Miao for their
interests and suggestions. The author is supported by MOST, Grant
No.106-2115-M-018-002-MY2.

\fontsize{11}{14pt plus.5pt minus.4pt}\selectfont

\vspace*{5mm}
\noindent{Kuo-Wei Lee}\\
\noindent{Department of Mathematics, National Changhua University of Education, Changhua, Taiwan}\\
\noindent{E-mail: \url{kwlee@cc.ncue.edu.tw; d93221007@gmail.com}}
\end{document}